\newlength\imagewidth
\newlength\imagescale
\newtheorem{Theorem}{Theorem}[section]
\newtheorem{Definition}[Theorem]{Definition}
\newtheorem{Lemma}[Theorem]{Lemma}
\newtheorem{Corollary}[Theorem]{Corollary}
\newtheorem{Remark}[Theorem]{Remark}
\newcommand{\N}{\mathbb N}
\newcommand{\RR}{{{\rm I} \kern -.15em {\rm R} }}
\newcommand{\C}{{{\rm l} \kern -.42em {\rm C} }}
\newcommand{\nat}{{{\rm I} \kern -.15em {\rm N} }}
\newcommand{\dv}{\mbox{\rm div}\ }
\newcommand{\lt}{\left}
\newcommand{\rt}{\right}
\newcommand{\be}{\begin{equation}}
\newcommand{\ee}{\end{equation}}
\newcommand{\beq}{\begin{eqnarray}}
\newcommand{\eeq}{\end{eqnarray}}
\newcommand{\beqs}{\begin{eqnarray*}}
\newcommand{\eeqs}{\end{eqnarray*}}
\newcommand{\bt}{\begin{Theorem}}
\newcommand{\et}{\end{Theorem}}
\newcommand{\br}{\begin{Remark}}
\newcommand{\er}{\end{Remark}}
\newcommand{\bc}{\begin{Corollary}}
\newcommand{\ec}{\end{Corollary}}
\newcommand{\bl}{\begin{Lemma}}
\newcommand{\el}{\end{Lemma}}
\newcommand{\bd}{\begin{definition}}
\newcommand{\ed}{\end{definition}}
\renewcommand{\geq}{\geqslant}
\renewcommand{\leq}{\leqslant}
\title{Consensus of  the Hegselmann-Krause opinion \\formation  model with  time delay}
\author{
Young-Pil Choi\footnote{Department of Mathematics, Yonsei University, 50 Yonsei-Ro, Seodaemun-Gu, Seoul 03722, Republic of Korea (\texttt{ypchoi@yonsei.ac.kr}). }
\and
Alessandro Paolucci\footnote{Dipartimento di Ingegneria e Scienze dell'Informazione e Matematica, Universit\`{a} di L'Aquila, Via Vetoio, Loc. Coppito, 67010 L'Aquila Italy (\texttt{alessandro.paolucci2@graduate.univaq.it}).}
\and
Cristina Pignotti\footnote{Dipartimento di Ingegneria e Scienze dell'Informazione e Matematica, Universit\`{a} di L'Aquila, Via Vetoio, Loc. Coppito, 67010 L'Aquila Italy (\texttt{pignotti@univaq.it}).}
}
\date{}
\begin{document}

\textwidth=160 mm

\textheight=225mm

\parindent=8mm

\frenchspacing

\maketitle

\begin{abstract}
In this paper, we study Hegselmann--Krause models with a time-variable time delay. Under appropriate assumptions, we show the exponential asymptotic consensus when the time delay satisfies a suitable smallness assumption. Our main strategies for this are based on Lyapunov functional approach and careful estimates on the trajectories. We then study the mean-field limit from the many-individual Hegselmann--Krause equation to the continuity-type partial differential equation as the number $N$ of individuals goes to infinity. For the limiting equation, we prove global-in-time existence and uniqueness of measure-valued solutions. We also use the fact that constants appearing in the consensus estimates for the particle system are independent of $N$ to extend the exponential consensus result to the continuum model. Finally, some numerical tests are illustrated.
\end{abstract}

\vspace{5 mm}

\def\qed{\hbox{\hskip 6pt\vrule width6pt
height7pt
depth1pt  \hskip1pt}\bigskip}



\section{Introduction}
\label{pbform}

\setcounter{equation}{0}

Recently, multi-agent models have attracted the interest of many authors in several scientific disciplines. A particularly interesting aspect of the dynamics of multi-agent systems is the natural self-organization which leads to the emergence of a globally collective behavior. This happens for biological systems \cite{Cama, CS1}, physical systems \cite{Stro}, ecosystems \cite{Sole}, social sciences \cite{Bellomo, BN, Castellano, Campi}. We also mention engineering applications \cite{Bullo, Desai, Jad}, economics models \cite{HK, Marsan}, control problems \cite{ Aydogdu, Piccoli, W}.
Various models have been proposed to study opinion dynamics \cite{Castellano, During, HK, Lo,  XWX}.

It is also natural to include a time delay in the model. Time delay effects frequently appear in applications, in biological and physical models, as well as in social dynamics, economics and control problems. Indeed, a certain time is needed for each agent to receive information from other agents. Also, a time lag can appear in the evolution of a multi-agent system as a reaction time. In the current work, we are interested in the so--called Hegselmann--Krause model (see \cite{HK}) and study the opinion formation in the presence of a time-variable time delay, see also \cite{Lu} for a model with constant interaction rates and constant delay. Among other results about Hegselmann--Krause type models without time delays, we refer to \cite{CF, JM}, where clusters formation is studied in case of bounded confidence. For second--order consensus models, in particular Cucker--Smale type models with time delays, we refer to recent papers \cite{CH, CL, CP, EHS,  HM, PR,  PT}.

Let $x_i \in \RR^{d}$ represent an opinion of $i$-th agent, then our main system reads as
\begin{equation}
\label{modello}
\frac{dx_i(t)}{dt}=\frac{\lambda} N\sum_{j\neq i} a_{ij}(t) (x_j(t-\tau(t))-x_i(t)),\quad i=1,\dots, N, \quad t > 0,
\end{equation}
with initial data given by
\begin{equation}
\label{modello,iniziale}
x_i(s)=x_{i,0}(s), \quad\forall\, s\in [-\tau(0),0],
\end{equation}
where the coupling strength $\lambda$ is a positive parameter and the communication rates $a_{ij}(t)$ are of the form
\begin{equation}\label{symmetric}
a_{ij}(t)=\psi(|x_j(t-\tau(t))-x_i(t)|),
\end{equation}
with $\psi: [0, +\infty)\rightarrow (0, +\infty)$ non-increasing function or, in the spirit of \cite{MT},
\begin{equation}\label{notsymmetric}
a_{ij}(t)= \frac {N \psi(|x_j(t-\tau(t))-x_i(t)|)} {\sum_{k=1}^N \psi(|x_k(t-\tau(t))-x_i(t)|)}
\end{equation}
with $\psi$ as above. Without loss of generality, we assume $\psi (0)=1.$ Note that in both cases we have, for $i=1, \dots, N,$
\begin{equation}\label{C2}
\frac 1 N \sum_{j=1}^N a_{ij} (t) \leq 1, \quad \forall \, t\geq 0.
\end{equation}
The time delay $\tau(\cdot)$ is a positive function belonging to $W^{1,\infty}_{loc}(0,+\infty)$ and satisfying
\begin{equation}\label{tau1}
0\le \tau (t)\le \overline \tau, \quad \forall \, t>0,
\end{equation}
and
\begin{equation}\label{tau2}
\tau^\prime(t)\le c, \quad c\in (0,1), \quad \forall \, t>0.
\end{equation}
Now, we define the position diameter as follows:
\begin{equation}\label{diameter}
d_X(t):=\max_{1 \leq i,j \leq N} |x_i(t)-x_j(t)|.
\end{equation}

\begin{Definition}
{\rm We say that the dynamics, subject to \eqref{modello}, converges to \emph{consensus} if}
\[
d_X(t)\rightarrow 0 \quad \mbox{\rm as} \quad t\rightarrow +\infty.
\]
\end{Definition}
By constructing a suitable Lyapunov functional, in the first part of our work, we will deduce exponential consensus estimates when the time delay is sufficiently small. Note that the constants in the decay estimates are independent of the number of the agents; this is crucial in order to extend the consensus estimate for the associated continuum equation below. 

In the second part of the paper, we will study the continuum model obtained as mean-field limit of the particle system
when $N\rightarrow \infty.$ Let $\mathcal{M}(\RR^{d})$ be the set of probability measures on the space $\RR^{d}$. Then, the continuum model associated to the particle system \eqref{modello} is given by
\begin{align}\label{kinetic model}
\begin{aligned}
&\partial_t \mu_t+ \dv (F[\mu_{t-\tau(t)}]\mu_t)=0, \quad x\in \RR^d, \quad t>0, \cr
&\mu_s=:g_s, \quad x\in \RR^d, \quad  s\in[-\tau(0),0],
\end{aligned}
\end{align}
where the velocity field $F$ is given by either
\begin{equation}
\label{F-symmetric}
F[\mu_{t-\tau(t)}](x)=\int_{\RR^d} \psi(|x-y|)(y-x)\,d\mu_{t-\tau(t)}(y),
\end{equation}
or
\begin{equation}
\label{F-nonsymmetric}
F[\mu_{t-\tau(t)}](x)= \displaystyle{\frac{\int_{\RR^d} \psi(|x-y|)(y-x)\,d\mu_{t-\tau(t)}(y)}{\int_{\RR^d} \psi(|x-y|)\,d\mu_{t-\tau(t)}(y)},}
\end{equation}
according to \eqref{symmetric} and \eqref{notsymmetric}, and $g_s \in \mathcal{C}([-\tau(0),0];\mathcal{M}(\RR^d))$.
For the derivation of the  continuum model associated to the Hegselmann--Krause system, without time delays, we refer to \cite{CFT}. Since then, kinetic formulations for opinion dynamics have been the objects of several works, see e.g \cite{CCR, CFT2, Carrillo,  Piccoli}.
On the other hand, the continuum formulation in presence of delay effects seems new.

Following a similar strategy to the one used for the kinetic Cucker-Smale equation in  \cite{CH} (see also \cite{CP}) we will study the global-in-time well--posedness of  \eqref{kinetic model}, more precisely, global existence and uniqueness of measure-valued solutions. In addition, we prove a stability estimate which allows to rigorously justify the mean--field limit procedure. Furthermore, the stability estimate, together with the uniform-in-$N$ consensus estimate for the particle model, enables us to extend  the asymptotic  consensus  to the continuum equation.

The rest of this paper is organized as follows. In Sect. \ref{ODE}, we will study the particle model \eqref{modello} and show the consensus behavior for small delays. In Sect. \ref{PDE}, we will focus on the continuum model \eqref{kinetic model} obtained formally from the particle system, and we will analyze it in the set of the probability measures employing the Wasserstein distance of order $p \in [1,\infty]$. Finally, some numerical simulations are illustrated in Sect. \ref{numerics}.

\section{Exponential consensus behavior of the particle system }\label{ODE}
In order to study the convergence to consensus of system \eqref{modello}-\eqref{modello,iniziale}, we need some auxiliary lemmas.
\begin{Lemma}\label{Lemma iniziale}
Let $\{ x_i\}_{i=1}^N$ be a solution to \eqref{modello}-\eqref{modello,iniziale}. Suppose that the initial opinions $x_{i,0}(s)$ are continuous functions of $s\in [-\tau (0) ,0]$. Moreover, let us denote
\[
R=\max_{s\in [-\tau(0),0]} \max_{1 \leq i \leq N} |x_i(s)|.
\]
Then we have
\[
R_X(t):= \max_{1 \leq i \leq N} |x_i(t)| \leq R, \quad \forall t\in [-\tau (0), +\infty ).
\]
\end{Lemma}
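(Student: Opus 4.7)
My plan is to rewrite the ODE \eqref{modello} as a linear equation with time-varying damping and delayed forcing, and then exploit a convex-combination structure in the resulting variation-of-constants formula. On the initial interval $[-\tau(0),0]$ the claim is immediate from the definition of $R$, so I focus on $t>0$. Setting $c_i(t):=\frac{1}{N}\sum_{j\neq i}a_{ij}(t)$ and $h_i(t):=\frac{1}{N}\sum_{j\neq i}a_{ij}(t)\,x_j(t-\tau(t))$, equation \eqref{modello} reads $\dot{x}_i+\lambda c_i x_i=\lambda h_i$ with $c_i(t)\leq 1$ by \eqref{C2}. Variation of constants then gives
\[
x_i(t)=e^{-\lambda C_i(t)} x_i(0)+\lambda\int_0^t e^{-\lambda(C_i(t)-C_i(s))} h_i(s)\,ds,\qquad C_i(t):=\int_0^t c_i(u)\,du.
\]

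The key observation is that $\lambda c_i(s) e^{-\lambda(C_i(t)-C_i(s))}$ integrated over $[0,t]$ equals $1-e^{-\lambda C_i(t)}$, so together with the coefficient $e^{-\lambda C_i(t)}$ in front of $x_i(0)$ we obtain a probability-weight decomposition. Taking norms and dominating $|h_i(s)|$ by $c_i(s)\,\max_j|x_j(s-\tau(s))|$, I would deduce
\[
|x_i(t)|\leq e^{-\lambda C_i(t)}|x_i(0)|+\bigl(1-e^{-\lambda C_i(t)}\bigr)\sup_{s\in[0,t]}\max_{1\leq j\leq N}|x_j(s-\tau(s))|.
\]
A short check based on \eqref{tau2}, namely $\tau(s)\leq \tau(0)+cs$ with $c<1$, shows $s-\tau(s)\geq-\tau(0)$ for every $s\geq 0$, so the delayed arguments stay inside the history interval where $x_j$ is defined.

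To close the argument, I introduce $\phi(t):=\sup_{s\in[-\tau(0),t]} R_X(s)$, which is continuous and non-decreasing with $\phi(0)=R$, and argue by contradiction. If $\phi(t_0)>R$ for some $t_0>0$, then since $R_X$ is continuous on the compact set $[-\tau(0),t_0]$ and $R_X(s)\leq R<\phi(t_0)$ on $[-\tau(0),0]$, the supremum is attained at some $s^*\in(0,t_0]$ with $|x_{i^*}(s^*)|=\phi(t_0)$; monotonicity of $\phi$ forces $\phi(s^*)=\phi(t_0)$. Applying the displayed estimate at $(s^*,i^*)$ together with $\sup_{s\in[0,s^*]}\max_j|x_j(s-\tau(s))|\leq\phi(s^*)=\phi(t_0)$ yields
\[
\phi(t_0)\leq R\,e^{-\lambda C_{i^*}(s^*)}+\phi(t_0)\bigl(1-e^{-\lambda C_{i^*}(s^*)}\bigr),
\]
which simplifies to $\phi(t_0)\leq R$, the desired contradiction. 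Hence $\phi\equiv R$ on $[0,+\infty)$, which is exactly the bound $R_X(t)\leq R$.

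The main obstacle is the delay: a pointwise approach based on differentiating $|x_i|^2$ at a maximising index gives, via Cauchy–Schwarz, only $\frac{d}{dt}|x_i|^2\leq 0$ with possible equality, which does not by itself prevent the running maximum from growing. Reformulating through the variation-of-constants formula and working with the running supremum $\phi$ converts the problem into a closed self-referential inequality that is resolved in a single step, avoiding any need for an $\varepsilon$-perturbation or iterative scheme on intervals.
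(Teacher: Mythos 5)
Your proof is correct, and it takes a genuinely different route from the paper's. The paper perturbs the bound to $R+\epsilon$, considers the maximal interval on which $R_X<R+\epsilon$, derives the scalar differential inequality $\frac{d}{dt}|x_i(t)|\le \lambda\bigl(R+\epsilon-|x_i(t)|\bigr)$ from Cauchy--Schwarz and \eqref{C2}, closes the continuity argument with Gronwall's inequality, and finally lets $\epsilon\to 0$. You instead write the equation as $\dot x_i+\lambda c_i x_i=\lambda h_i$, apply variation of constants, and exploit the fact that $e^{-\lambda C_i(t)}$ together with $\lambda c_i(s)e^{-\lambda(C_i(t)-C_i(s))}\,ds$ forms a convex combination of total mass one, so that $|x_i(t)|$ is dominated by a convex combination of $|x_i(0)|\le R$ and the running supremum of the (delayed) states; evaluating this at a point where the supremum of $R_X$ over $[-\tau(0),t_0]$ is attained yields $\phi(t_0)\le e^{-\lambda C}R+(1-e^{-\lambda C})\phi(t_0)$ and hence $\phi(t_0)\le R$ in one step. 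This avoids both the $\epsilon$-perturbation and the slightly delicate differentiation of $|x_i(t)|$ (the paper divides the inequality for $\frac12\frac{d}{dt}|x_i|^2$ by $|x_i|$), whereas the paper's continuity-argument template is the one reused later for the continuum model. Two points you should make explicit in a write-up: the final rearrangement needs $e^{-\lambda C_{i^*}(s^*)}>0$, which holds since $C_{i^*}(s^*)\le s^*<\infty$ (and if $C_{i^*}(s^*)=0$ the inequality already reads $\phi(t_0)\le R$); and the domination $|h_i(s)|\le c_i(s)\max_j|x_j(s-\tau(s))|$ uses the positivity of the weights $a_{ij}$, which is guaranteed by $\psi>0$.
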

\begin{proof}
For any $\epsilon>0$, let us define
$$
S^{\epsilon}=\lt\{ t>0 \,:\  R_X(s)<R+\epsilon , \ \  s\in [0,t) \rt\}.
$$
Let us denote $T^\epsilon_*:=\sup S^\epsilon .$
By continuity, $S^{\epsilon} \neq \emptyset$. Hence, $T_*>0$.
We claim that $T_*^{\epsilon}=+\infty.$ To prove this we argue by contradiction. Suppose that
$T_*^{\epsilon}<+\infty.$ Then,
\begin{equation}\label{C1}
\lim_{t\rightarrow T_*^{\epsilon-}} R_X(t)=R+\epsilon  \quad \text{and} \quad R_X(t)<R+\epsilon ,\quad \forall t<T_*^{\epsilon}.
\end{equation}
Now, for $t<T_*^{\epsilon}$, we can compute
\begin{align*}
\frac{1}{2}\frac{d}{dt}|x_i(t)|^2 &= \lt\langle x_i, \frac \lambda N\sum_{j\neq i} a_{ij}(t) (x_j(t-\tau (t)) -x_i(t)) \rt\rangle \\
&= \frac \lambda N\sum_{j\neq i} a_{ij}(t) \Bigl( \langle x_i(t), x_j(t-\tau(t)) \rangle -|x_i(t)|^2 \Bigr) \\
&\leq  \frac \lambda N\sum_{j\neq i} a_{ij}(t) |x_i(t)| \Bigl( |x_j(t-\tau(t))|-|x_i(t)| \Bigr) \\
&\leq  \frac \lambda N\sum_{j\neq i} a_{ij}(t) \Bigl( R+{\epsilon}-|x_i(t)| \Bigr) |x_i(t)|.
\end{align*}
Then, we deduce
$$
\frac{d}{dt}|x_i(t)|\le \frac \lambda N\sum_{j\neq i} a_{ij}(t) \Bigl( R+{\epsilon}-|x_i(t)| \Bigr),
$$
and so, recalling \eqref{C2}, we have
\[
\frac{d}{dt} |x_i(t)| \leq \lambda (R+\epsilon -|x_i(t)|).
\]
Hence, Gronwall's inequality yields the estimate
\begin{equation}\label{C4}
|x_i(t)|\leq e^{-\lambda t} \Bigl( \vert x_i(0)\vert -R -\epsilon  \Bigr)+R+\epsilon <R+\epsilon.
\end{equation}
From \eqref{C4}
we deduce $\lim_{t\rightarrow T_*^{\epsilon}} R_X(t)<R+\epsilon$, which is in contradiction with \eqref{C1}.
Being $\epsilon$ arbitrary, the lemma is proved.
\end{proof}
\begin{Remark}{\rm
From Lemma \ref{Lemma iniziale}, we have that for $i,j=1, \dots, N,$
$$
|x_j(t-\tau(t))-x_i(t)|\leq |x_j(t-\tau(t))|+|x_i(t)|\leq 2R,\quad \forall\, t\ge 0.
$$
So, from \eqref{symmetric} and \eqref{notsymmetric}, we deduce
\begin{equation}\label{stimapsi}
a_{ij}(t)\geq \psi (2R),\quad \forall\ t\ge 0.
\end{equation}
}
\end{Remark}
The diameter function $d_X$  is not differentiable  in general. Thus, we introduce the upper Dini derivative to consider the time derivative of this function. For a given continuous function $F = F(t)$, the upper Dini derivative of $F$ at $t$ is defined by
$$
D^+ F(t) := \limsup_{h \to 0^+} \frac{F(t+h) - F(t)}{h}.
$$
Note that the Dini derivative coincides with the usual derivative when the function is differentiable at $t$.

Then we have the following lemma.
\begin{Lemma}\label{stimaresto}
Let $\{ x_i\}_{i=1}^N$ be a solution to \eqref{modello}-\eqref{modello,iniziale}. If
\[
\sigma_{\tau}(t):= \int_{t-\tau (t)}^t \max_{1 \leq k \leq N} |\dot{x}_k(s)|\,ds,
\]
then the position diameter $d_X(\cdot)$ satisfies
\[
D^+ d_X(t)\leq \frac{2 \lambda }{\psi (2R)} \sigma_{\tau} (t)-\lambda \psi (2R)d_X(t), \quad \forall \, t \geq 0.
\]
\end{Lemma}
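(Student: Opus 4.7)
At each $t$ where $d_X(t)>0$, I pick a pair of indices $\bar i(t),\bar j(t)$ attaining the maximum defining $d_X(t)=|x_{\bar i}(t)-x_{\bar j}(t)|$. Since $d_X$ is the pointwise maximum of finitely many $C^1$ functions, the upper Dini derivative satisfies
\[
D^+ d_X(t)\leq \frac{d}{dt}|x_{\bar i}(t)-x_{\bar j}(t)|=\langle u,\dot x_{\bar i}(t)-\dot x_{\bar j}(t)\rangle,
\]
with $u:=(x_{\bar i}(t)-x_{\bar j}(t))/d_X(t)$. Plugging in \eqref{modello} and writing $x_k(t-\tau(t))=x_k(t)+(x_k(t-\tau)-x_k(t))$ to isolate the delay error, I split $\langle u,\dot x_{\bar i}-\dot x_{\bar j}\rangle=A+B$ with
\[
A=\frac{\lambda}{N}\sum_k\bigl[a_{\bar i k}\langle u,x_k(t)-x_{\bar i}\rangle-a_{\bar j k}\langle u,x_k(t)-x_{\bar j}\rangle\bigr],\quad B=\frac{\lambda}{N}\sum_k(a_{\bar i k}-a_{\bar j k})\langle u,x_k(t-\tau)-x_k(t)\rangle.
\]

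For $A$ I use the geometric consequence of maximality: if $p_k:=\langle u,x_k(t)\rangle$ exceeded $\langle u,x_{\bar i}\rangle$, then $|x_k-x_{\bar j}|\geq p_k-\langle u,x_{\bar j}\rangle$ would exceed $d_X$, contradicting the maximality of $|x_{\bar i}-x_{\bar j}|$; the symmetric argument rules out $p_k<\langle u,x_{\bar j}\rangle$. Hence $\langle u,x_{\bar j}\rangle\leq p_k\leq \langle u,x_{\bar i}\rangle$ for every $k$, and
\[
A=-\frac{\lambda}{N}\sum_k\bigl[a_{\bar i k}(p_{\bar i}-p_k)+a_{\bar j k}(p_k-p_{\bar j})\bigr],
\]
with all the factors nonnegative. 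Using the uniform lower bound $a_{\bar i k},a_{\bar j k}\geq \psi(2R)$ from \eqref{stimapsi} together with the telescoping identity $(p_{\bar i}-p_k)+(p_k-p_{\bar j})=d_X(t)$, each summand is at least $\psi(2R)\,d_X(t)$, and summing the $N$ of them cancels the $1/N$ prefactor to give $A\leq -\lambda\psi(2R)\,d_X(t)$.

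For $B$ I use $|x_k(t-\tau)-x_k(t)|\leq\int_{t-\tau(t)}^t|\dot x_k(s)|\,ds\leq\sigma_\tau(t)$ and the pointwise bound $a_{ij}\leq 1/\psi(2R)$, which in the symmetric case \eqref{symmetric} is trivial ($a_{ij}\leq 1\leq 1/\psi(2R)$) and in the non-symmetric case \eqref{notsymmetric} follows from $\sum_k\psi_{ik}\geq N\psi(2R)$ by Lemma \ref{Lemma iniziale}; bounding $|a_{\bar i k}-a_{\bar j k}|\leq a_{\bar i k}+a_{\bar j k}\leq 2/\psi(2R)$ then gives $|B|\leq (2\lambda/\psi(2R))\sigma_\tau(t)$. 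Adding the two estimates yields the stated inequality, and the degenerate case $d_X(t)=0$ is handled by a direct first-order expansion of $x_i(t+h)-x_j(t+h)$ together with the same delay bound. The main obstacle is the bulk estimate: a naive use of just the $k=\bar j$ summand of the first sum in $A$ gives a decay rate of order $\psi(2R)/N$, not $\psi(2R)$. The key step is to pair the $k$-th summands of the two sums in $A$ and exploit the telescoping identity, which distributes the diameter across all $N$ agents and produces the $N$-independent rate that the continuum limit later requires.
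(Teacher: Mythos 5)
Your proof is correct and follows essentially the same route as the paper's: the same splitting of each velocity into a delay-error part (bounded by $\sigma_\tau$ via $a_{ik}\leq 1/\psi(2R)$) and a present-time part, the same use of the lower bound $a_{ik}\geq\psi(2R)$ on the nonpositive alignment terms, and the same pairing/telescoping of the two sums over $k$ that yields the $N$-independent decay rate $\lambda\psi(2R)$. The only cosmetic differences are that you differentiate $d_X$ directly via the unit vector $u$ while the paper works with $\tfrac12 D^+d_X^2$ and divides at the end, and you phrase the maximality argument through the projections $p_k$ rather than the Cauchy--Schwarz inequality.
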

\begin{proof}
Due to the continuity of the trajectories $x_i(t),$ $i=1, \dots, N,$  there is an at most countable system of open disjoint
intervals $\{\mathcal{I}_\delta\}_{\delta\in\N}$ such that
$$
   \bigcup_{\delta\in\N} \overline{\mathcal{I}_\delta} = [0,\infty)
$$
and  for each ${\delta\in\N}$ there exist indices $i(\delta)$, $j(\delta)$
such that
$$
   d_X(t) = |x_{i(\delta)}(t) - x_{j(\delta)}(t)|, \quad t\in \mathcal{I}_\delta.
$$
For simplicity of notation, we can  put  $i:=i(\delta)$, $j:=j(\delta).$ Of course, we can assume $i\ne j.$ For $t\in \mathcal{I}_\delta$, we have

\begin{equation}\label{Q1}
\begin{array}{l}
\displaystyle{
\frac{1}{2}D^+ d_X^2(t) }\\
\hspace{0.5 cm}\displaystyle{
=\frac \lambda N \Big \langle x_i(t) -x_j(t), \sum_{k\neq i} a_{ik}(t) (x_k(t-\tau(t)) -x_i(t))-\sum_{k\neq j} a_{jk}(t) (x_k(t-\tau (t)) -x_j(t)) \Big\rangle } \\
\displaystyle{
\hspace {0.5 cm}
 = \frac \lambda N \sum_{k\neq i} a_{ik}(t) \langle x_i(t)-x_j(t), x_k(t-\tau(t)) -x_i(t) \rangle }\\
 \hspace{2.5 cm}\displaystyle{
 -\frac \lambda N\sum_{k\neq j} a_{jk} (t)\langle x_i(t)-x_j(t), x_k(t-\tau(t)) -x_j(t) \rangle } \\
\hspace{0.5 cm}=: I_1+I_2.
\end{array}
\end{equation}
Now, we can rewrite $I_1$ and $I_2$ as
\begin{equation}\label{I1}
\begin{array}{l}
\displaystyle{
I_1= \frac \lambda N\sum_{k\neq i} a_{ik}(t) \langle x_i(t)-x_j(t), x_k(t-\tau(t)) -x_k(t)\rangle}\\
\displaystyle{
\hspace{1 cm} +\frac \lambda N \sum_{k\neq i} a_{ik} (t) \langle x_i(t)-x_j(t),x_k(t)-x_i(t)\rangle} \quad \mbox{and}
\end{array}
\end{equation}
\begin{equation*}
\begin{array}{l}
\displaystyle{
I_2= -\frac \lambda N\sum_{k\neq j} a_{jk}(t) \langle x_i(t)-x_j(t), x_k(t-\tau(t)) -x_k(t)\rangle}\\
\displaystyle{
\hspace{1 cm} -\frac \lambda N \sum_{k\neq j} a_{jk} (t) \langle x_i(t)-x_j(t),x_k(t)-x_j(t)\rangle,}
\end{array}
\end{equation*}
respectively. We observe that for all$k=1, \dots, N,$
\begin{equation*}
\langle x_i(t)-x_j(t),x_k(t)-x_i(t)\rangle \leq 0.
\end{equation*}
Indeed, using Cauchy-Schwartz inequality, we have that
\begin{align*}
\langle x_i(t)-x_j(t) , x_k(t)-x_i(t) \rangle &= \langle x_i(t)-x_j(t),x_k(t)-x_j(t)\rangle -|x_i(t)-x_j(t)|^2 \\
&\leq  |x_i(t)-x_j(t)| \left (|x_k(t)-x_j(t)|-|x_i(t)-x_j(t)|\right )\\
& \leq  0.
\end{align*}
Now, observe that in both cases \eqref{symmetric} and \eqref{notsymmetric},
\begin{equation}\label{stimapsialto}
 a_{ik}(t)\leq \frac{1}{\psi(2R)},\quad \forall \ t\ge 0.
\end{equation}
Hence, using \eqref{stimapsi} and \eqref{stimapsialto} in \eqref{I1}, we obtain
\begin{equation}\label{stimaI1}
I_1 \leq \frac \lambda N \frac {d_X(t)} {\psi (2R)} \sum_{k=1}^N |x_k(t-\tau(t)) -x_k(t)|+\frac \lambda N {\psi (2R)} \sum_{k=1}^N \langle x_i(t)-x_j(t),x_k(t)-x_i(t)\rangle .
\end{equation}
Now, observe that
\begin{equation*}
-\langle x_i(t)-x_j(t),x_k(t)-x_j(t)\rangle \leq 0, \quad \forall\ k=1, \dots, N.
\end{equation*}
Then, arguing analogously to before, one can estimate
\begin{equation}\label{stimaI2}
I_2 \leq \frac \lambda N \frac{d_X(t)}{\psi (2R)}\sum_{k=1}^N|x_k(t-\tau(t)) -x_k(t)|-\frac \lambda N \psi (2R)\sum_{k=1}^N \langle x_i(t)-x_j(t),x_k(t)-x_j(t)\rangle.
\end{equation}
Therefore, using \eqref{stimaI1} and \eqref{stimaI2} in \eqref{Q1}, we obtain
$$
\frac 1 2 D^+ d_X^2(t)\leq 2 \frac{\lambda}N \frac {d_X(t)}{\psi (2R)} \sum_{k=1}^N |x_k(t-\tau(t))-x_k(t)| -\lambda \psi (2R)d_X^2(t),
$$
and so
\begin{equation}\label{Q2}
D^+ d_X(t)\leq 2 \frac{\lambda}{ N\psi (2R)} \sum_{k=1}^N |x_k(t-\tau(t))-x_k(t)| -\lambda \psi (2R)d_X(t).
\end{equation}
Noticing that
\begin{align*}
\sum_{k=1}^N |x_k(t-\tau(t)) -x_k(t)|&\leq  \sum_{k=1}^N \int_{t-\tau(t) }^t |\dot{x}_k(s)| \,ds \\
&\leq  N\sigma _{\tau} (t),
\end{align*}
then from \eqref{Q2} we obtain
$$
D^+d_X(t) \leq \frac{2 \lambda }{\psi (2R)} \sigma_{\tau} (t)-\lambda \psi (2R) d_X(t),
$$
which proves the lemma.
\end{proof}
\begin{Lemma}
Let $\{ x_i\}_ {i=1}^N$ be a solution to \eqref{modello}-\eqref{modello,iniziale}. Then, we have
\begin{equation}\label{Q3}
\max_{1 \leq i \leq N} |\dot{x}_i(t)| \leq \frac{\lambda }{\psi (2R)}\sigma_\tau (t)+\frac{\lambda}{\psi (2R)}d_X(t).
\end{equation}
\end{Lemma}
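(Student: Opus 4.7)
The plan is to derive the bound directly from the differential equation \eqref{modello} by a straightforward triangle inequality, exactly in the spirit of the estimate used in the proof of Lemma \ref{stimaresto} when controlling the mismatch between the delayed and the current positions.

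First I would start from the identity
\[
\dot{x}_i(t) = \frac{\lambda}{N}\sum_{j\neq i} a_{ij}(t)\bigl(x_j(t-\tau(t)) - x_i(t)\bigr),
\]
and pass to norms, getting
\[
|\dot{x}_i(t)| \leq \frac{\lambda}{N}\sum_{j\neq i} a_{ij}(t)\, |x_j(t-\tau(t)) - x_i(t)|.
\]
Then I would insert the pivot point $x_j(t)$ in the difference and use the triangle inequality:
\[
|x_j(t-\tau(t)) - x_i(t)| \leq |x_j(t-\tau(t)) - x_j(t)| + |x_j(t) - x_i(t)|.
\]

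For the first term, the fundamental theorem of calculus gives
\[
|x_j(t-\tau(t)) - x_j(t)| \leq \int_{t-\tau(t)}^{t} |\dot{x}_j(s)|\,ds \leq \sigma_\tau(t),
\]
by the very definition of $\sigma_\tau(t)$. For the second term, the definition \eqref{diameter} of the position diameter yields $|x_j(t) - x_i(t)| \leq d_X(t)$.

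Finally, I would invoke the uniform upper bound \eqref{stimapsialto}, namely $a_{ij}(t) \leq 1/\psi(2R)$, and use that the sum contains fewer than $N$ terms to conclude
\[
|\dot{x}_i(t)| \leq \frac{\lambda}{N}\cdot N \cdot \frac{1}{\psi(2R)}\bigl(\sigma_\tau(t) + d_X(t)\bigr) = \frac{\lambda}{\psi(2R)}\sigma_\tau(t) + \frac{\lambda}{\psi(2R)}d_X(t).
\]
Taking the maximum over $i$ on the left-hand side yields \eqref{Q3}. There is no real obstacle here: every ingredient (triangle inequality, definition of $\sigma_\tau$ and $d_X$, bound \eqref{stimapsialto}) is already at hand, so the argument is essentially a one-line estimate. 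The only point requiring a moment of care is the choice of pivot $x_j(t)$ — pivoting at $x_i(t-\tau(t))$ instead would produce an $|\dot{x}_i|$-term on the right-hand side and force an implicit bound, so the pivot must be taken at the current position of agent $j$.
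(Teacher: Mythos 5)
Your proof is correct and follows essentially the same route as the paper: the paper likewise splits $x_j(t-\tau(t))-x_i(t)$ through the pivot $x_j(t)$, bounds the delayed increment by $\int_{t-\tau(t)}^{t}|\dot{x}_j(s)|\,ds\leq\sigma_\tau(t)$, the second piece by $d_X(t)$, and uses $a_{ij}(t)\leq 1/\psi(2R)$ together with the fact that the sum has $N-1\leq N$ terms. No discrepancies to report.
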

\begin{proof}
A straightforward computation gives
\begin{align*}
|\dot{x}_i(t)| &= \lt| \frac \lambda N \sum_{j\neq i} a_{ij}(t) (x_j(t-\tau(t)) -x_j(t))+\frac \lambda N\sum_{j\neq i} a_{ij}(t) (x_j(t) -x_i(t)) \rt| \\
&\leq \frac \lambda N\sum_{j\neq i } a_{ij}(t) |x_j(t-\tau(t)) -x_j(t)|+\frac \lambda N\sum_{j\neq i} a_{ij}(t) |x_j(t)-x_i(t)|\\
&\leq  \frac{\lambda}{N\psi (2R)} \sum_{j\neq i} \int_{t-\tau (t)}^t |\dot{x}_j(s)|\,ds +\lambda \frac{N-1}{N\psi (2R)}d_X(t)\\
&\leq  \lambda \frac{N-1}{N\psi (2R)} \sigma _{\tau} (t)+\lambda \frac{N-1}{N\psi (2R)}d_X(t).
\end{align*}
Taking the maximum for $i\in \{1, \dots, N\},$ we obtain \eqref{Q3}.
\end{proof}

\begin{Theorem}\label{flocking particle}
Let $\{ x_i\}_{i=1}^N$ be a solution to \eqref{modello}-\eqref{modello,iniziale}. Suppose that
\[
\overline \tau <\ln \left (
1+ \frac {\psi^3 (2R)} {2 + \psi^2 (2R)} \, \frac{1-c} \lambda
\right ).
\]
Then, there exist $\gamma, C >0 $ such that
\[
d_X(t) \leq C e^{-\gamma t}, \quad \forall \,t\geq 0.
\]
\end{Theorem}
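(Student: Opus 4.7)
The plan is to construct a Lyapunov functional that simultaneously controls the diameter and the delay-induced residual, then show it decays exponentially by combining the two preceding lemmas. Setting $M(t) := \max_{1\le k \le N} |\dot x_k(t)|$, I would work with
\[
\mathcal{L}(t) := d_X(t) + \alpha\int_{t-\tau(t)}^t e^{\gamma(s-t+\tau(t))} M(s)\,ds,
\]
where $\alpha, \gamma > 0$ are to be tuned. Since $d_X(t) \le \mathcal{L}(t)$, once $D^+\mathcal{L}(t) \le -\gamma\mathcal{L}(t)$ is established, the standard comparison principle for upper Dini derivatives yields $\mathcal{L}(t) \le \mathcal{L}(0)\, e^{-\gamma t}$, from which the conclusion follows with $C = \mathcal{L}(0)$.

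To bound $D^+\mathcal{L}$, I would apply Lemma \ref{stimaresto} to $D^+ d_X(t)$, obtaining the good term $-\lambda\psi(2R)\,d_X(t)$ and a bad residual $\tfrac{2\lambda}{\psi(2R)}\sigma_\tau(t)$. For the integral term, Leibniz's rule produces an upper-boundary contribution $\alpha e^{\gamma\tau(t)} M(t)$, a lower-boundary contribution $-\alpha(1-\tau'(t)) M(t-\tau(t)) \le 0$ by \eqref{tau2}, and an interior contribution whose sign is $\alpha\gamma(\tau'(t)-1) \le -\alpha\gamma(1-c)$ times the weighted integral. The factor $M(t)$ on the upper boundary is then re-expressed via \eqref{Q3} in terms of $\sigma_\tau(t) + d_X(t)$, while $\sigma_\tau(t)$ itself is dominated by the weighted integral inside $\mathcal{L}$ times $e^{\gamma\bar\tau}$.

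The heart of the argument is the coefficient match: choose $\alpha$ so that the negative interior term $-\alpha\gamma(1-c)$ times the integral absorbs both the bad residual $\tfrac{2\lambda}{\psi(2R)}\sigma_\tau(t)$ and the $\sigma_\tau$-part of the boundary contribution, while $\gamma$ is selected small enough that the $d_X$-part of the boundary term, of size $\tfrac{\alpha\lambda e^{\gamma\bar\tau}}{\psi(2R)} d_X(t)$, is dominated by $-\lambda\psi(2R) d_X(t) - \gamma d_X(t)$. Matching coefficients leads to an algebraic inequality of the shape
\[
\lambda(e^{\gamma\bar\tau}-1)\bigl(2+\psi^{2}(2R)\bigr) \le \gamma\,\psi^{3}(2R)(1-c),
\]
which, evaluated for $\gamma$ close to $1$, reduces precisely to the smallness hypothesis on $\bar\tau$ stated in the theorem.

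The main obstacle is this bookkeeping: the positive terms must be split and routed to the correct negative counterparts so that the sharp threshold $\ln\bigl(1+\psi^{3}(2R)(1-c)/[\lambda(2+\psi^{2}(2R))]\bigr)$ is recovered. The power $\psi^{3}(2R)$ arises from combining one $\psi(2R)$ from the linear dissipation in Lemma \ref{stimaresto} with two factors of $1/\psi(2R)$ from the upper bound $a_{ij}(t) \le 1/\psi(2R)$, the sum $2+\psi^{2}(2R)$ records the two distinct positive contributions (the one from $D^+ d_X$ and the one from the boundary term of the integral), and the factor $(1-c)$ comes from the time-derivative bound on $\tau$.
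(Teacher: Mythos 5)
Your overall strategy --- a Lyapunov functional of the form (diameter) $+$ (weighted integral of $M(s)=\max_k|\dot x_k(s)|$ over the delay window), estimated via Lemma \ref{stimaresto} and \eqref{Q3} --- is exactly the paper's. But the specific functional you chose does not deliver the stated threshold, and the ``inequality of the shape $\lambda(e^{\gamma\bar\tau}-1)(2+\psi^2(2R))\le\gamma\,\psi^3(2R)(1-c)$'' you assert at the end does not follow from your computation. Differentiating your single integral produces the upper-boundary term $\alpha e^{\gamma\tau(t)}M(t)$, whose coefficient is bounded below by $\alpha$ and does \emph{not} vanish as $\bar\tau\to 0$; after \eqref{Q3} this injects $\frac{\alpha\lambda e^{\gamma\bar\tau}}{\psi(2R)}\bigl(\sigma_\tau(t)+d_X(t)\bigr)$. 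Since you discard the lower-boundary term, your only negative term against $\sigma_\tau$ is the interior one, $-\alpha\gamma(1-\tau'(t))H(t)\le-\alpha\gamma(1-c)H(t)$ where $H$ is your weighted integral; absorbing the $\sigma_\tau$-terms therefore forces $\gamma(1-c)\psi(2R)>\lambda e^{\gamma\bar\tau}\ge\lambda$, i.e.\ $\gamma$ bounded \emph{below} (in tension with your later request that $\gamma$ be ``small enough''), and the compatibility of your two conditions on $\alpha$ reads $\lambda e^{\gamma\bar\tau}(2+\psi^2(2R))<\gamma\,\psi^3(2R)(1-c)$ --- with $e^{\gamma\bar\tau}$, not $e^{\gamma\bar\tau}-1$. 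Setting $A:=\frac{\psi^3(2R)(1-c)}{\lambda(2+\psi^2(2R))}$, at $\gamma=1$ this requires $A>e^{\bar\tau}>1$, which can fail even for $\bar\tau$ arbitrarily small; optimizing over $\gamma$ yields a condition of the form $\bar\tau<A/e$, not the theorem's $\bar\tau<\ln(1+A)$.

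The missing ``$-1$'' is precisely what the paper's functional is engineered to produce: it takes the double integral $F(t)=d_X(t)+\beta\int_{t-\tau(t)}^te^{-(t-s)}\int_s^tM(\sigma)\,d\sigma\,ds$. Because the inner integral vanishes at $s=t$, the upper boundary contributes nothing, and the $M(t)$-term arises only from differentiating the inner integral, with the small coefficient $\beta\int_{t-\tau(t)}^te^{-(t-s)}\,ds=\beta(1-e^{-\tau(t)})=O(\bar\tau)$; moreover the lower-boundary term is $-\beta(1-\tau'(t))e^{-\tau(t)}\sigma_\tau(t)$, a negative multiple of $\sigma_\tau$ itself with an $O(1)$ coefficient, and it is this term (not the interior one) that absorbs $\frac{2\lambda}{\psi(2R)}\sigma_\tau(t)$ without constraining $\gamma$ from below. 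These two structural features are what yield the threshold $\bar\tau<\ln(1+A)$. Equivalently, by Fubini the paper's integral equals $\int_{t-\tau(t)}^tM(\sigma)\bigl(e^{-(t-\sigma)}-e^{-\tau(t)}\bigr)\,d\sigma$: the weight vanishes at the lower endpoint and is $O(\bar\tau)$ at the upper endpoint, whereas your weight is $1$ at the lower endpoint and $e^{\gamma\tau(t)}\ge 1$ at the upper one. To repair your argument you must switch to a weight of this kind; as written, the final coefficient match fails.
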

\begin{proof}
Consider the Lyapunov functional
\[
F(t)=d_X(t)+\beta \int_{t-\tau(t) }^t {e^{-(t-s)} \int_s^t \max_{1 \leq j \leq N} |\dot{x}_j(\sigma )|\,d\sigma} ds.
\]
Then, we have
\begin{align*}
D^+F(t) &= D^+ d_X(t)-\beta (1-\tau '(t)) e^{-\tau (t)} \int_{t-\tau (t)}^t \max_{1 \leq j \leq N} |\dot{x}_j(\sigma)|\,d\sigma \cr
&\quad +\beta \int_{t-\tau (t)}^t e^{-(t-s)} \max_{1 \leq j \leq N} |\dot{x}_j(t)|\,ds \cr
&\quad - \beta \int_{t-\tau (t)}^t e^{-(t-s)}\int_s^t \max_{1 \leq j \leq N} |\dot{x}_j(\sigma )|\,d\sigma ds  \cr
&\leq  \lt( -\lambda \psi(2R) + (1-e^{-\tau (t)})\frac{\lambda \beta}{\psi (2R)}\rt) d_X(t)\\
&\quad +\lt( \frac{2\lambda }{\psi(2R)}-\beta(1-\tau '(t))e^{-\tau (t)}+\beta (1-e^{-\tau(t)})\frac{\lambda}{\psi (2R)} \rt) \sigma_{\tau} (t) \\
&\quad - \beta \int_{t-\tau (t)}^t e^{-(t-s)} \int_s^t \max_{1 \leq j \leq N} |\dot{x}_j(\sigma)|\,d\sigma ds.
\end{align*}
Hence, from the assumptions \eqref{tau1} and \eqref{tau2} we deduce
\begin{align}\label{Q4}
D^+F(t) &\leq \lt( -\lambda \psi(2R) + (1-e^{-\overline \tau})\frac{\lambda \beta}{\psi (2R)}\rt) d_X(t)\cr
&\quad + \lt( \frac{2\lambda }{\psi(2R)}-\beta(1-c)e^{-\overline \tau}+\beta (1-e^{-\overline \tau})\frac{\lambda}{\psi (2R)} \rt) \sigma_{\tau} (t)\cr
&\quad - \beta \int_{t-\tau (t)}^t e^{-(t-s)} \int_s^t \max_{1 \leq j \leq N} |\dot{x}_j(\sigma)|\,d\sigma ds.
\end{align}
Now, we want to show that for $\overline \tau$ sufficiently small we can choose the positive parameter $\beta$ in the definition of  the Lyapunov functional $F(\cdot)$ such that \eqref{Q4} implies
\begin{equation}\label{Q5}
D^+F(t)\le - \gamma F(t), \quad \forall \,t\geq 0,
\end{equation}
for a suitable positive constant $\gamma.$
In order to have \eqref{Q5} the following two conditions have to be satisfied:
\begin{equation}\label{beta1}
\frac{2\lambda }{\psi(2R)}-\beta(1-c)e^{-\overline \tau}+\beta (1-e^{-\overline \tau})\frac{\lambda}{\psi (2R)}\le 0 \quad \mbox{and}
\end{equation}
\begin{equation}\label{beta2}
-\psi(2R) + (1-e^{-\overline \tau})\frac{\beta}{\psi (2R)}<0.
\end{equation}
We can rewrite \eqref{beta1} as
$$
{2\lambda }-\beta \psi (2R) (1-c)e^{-\overline \tau}+\lambda \beta (1-e^{-\overline \tau})\le 0,
$$
which is satisfied for
\begin{equation}\label{firstcond}
\beta \ge \frac {2\lambda }{\psi (2R) (1-c) e^{-\overline \tau }-\lambda (1-e^{-\overline\tau })}.
\end{equation}
This requires a first restriction on the time delay size, i.e.
\begin{equation}\label{tausmall1}
\overline\tau <\ln \left ( 1+ \psi (2R) \frac{(1-c)} \lambda
\right ).
\end{equation}
Condition \eqref{beta2} instead implies
\begin{equation}\label{secondcond}
\beta <\frac {\psi^2(2R)}{1-e^{-\overline\tau }}.
\end{equation}
Then, for the existence of a parameter $\beta$ satisfying both \eqref{firstcond} and \eqref{secondcond}, we need
$$
\frac {2\lambda }{\psi (2R) (1-c) e^{-\overline \tau }-\lambda (1-e^{-\overline\tau })}< \frac {\psi^2(2R)}{1-e^{-\overline\tau }},
$$
and this gives a further condition on $\overline\tau,$ namely
$$ \overline \tau <\ln \left (
1+ \frac {\psi^3 (2R)} {2 + \psi^2 (2R)} \, \frac{1-c} \lambda
\right ),
$$
which clearly implies \eqref{tausmall1}. Hence, the theorem is proved.
\end{proof}

\begin{Remark}{\rm
Note that in the case of potential communication rates as in \eqref{symmetric} the estimate \eqref{stimapsialto} can be replaced by
$$a_{ij}(t)\le 1.$$
Therefore, in such a case we can obtain an improved bound on the size of the time delay:
$$\overline \tau <\ln \left (1 +\frac {\psi}{2+\psi}\frac {1-c}{\lambda }\right ).$$}
\end{Remark}
\section{The continuum model: measure-valued solutions \& consensus behavior}\label{PDE}
In this section, in the same spirit of \cite{CH}, we provide the existence, uniqueness of solution to the continuum model \eqref{kinetic model} associated to \eqref{modello} and its consensus behavior under a suitable smallness assumption on the delay function $\tau(t)$.

In order to study the existence and uniqueness of the solution of the continuum model, we  assume that the delay function $\tau(\cdot)$ is bounded from below, namely there exists a strictly positive constant $\tau^*>0$ such that
\[
\tau(t)\geq \tau^*, \quad \forall \,t\geq 0.
\]
Moreover, we assume that the potential $\psi(\cdot)$ in \eqref{symmetric} and \eqref{notsymmetric} is also Lipschitz continuous, and we denote by $L$ its Lipschitz constant.

We define the Wasserstein distance as follows.
\begin{Definition}
Let $\mu, \nu \in \mathcal{M}(\RR^d)$ be two probability measures on $\RR^d$. Then, we define the Wasserstein distance of order $1 \leq p < \infty$ between $\mu$ and $\nu$ as
\begin{equation*}
d_p(\mu,\nu):=\inf_{\pi \in \Pi(\mu,\nu)} \lt(\int_{\RR^d \times \RR^d} |x-y|^p\,d\pi(x,y)\rt)^{1/p}
\end{equation*}
and for $p = \infty$, limiting case as $p \to \infty$,
\[
d_\infty(\mu,\nu) := \inf_{\pi \in \Pi(\mu,\nu)} \lt(\sup_{(x,y) \in \mbox\small{supp}(\pi)} | x - y| \rt),
\]
where $\Pi(\mu,\nu)$ is the set of all probability measures on $\RR^{2d}$ with marginals $\mu$ and $\nu$ (also called \emph{couplings} for $\mu$ and $\nu$), namely
\begin{equation*}
\int_{\RR^d \times \RR^d} \phi(x) \,d\pi(x,y)=\int_{\RR^d} \phi(x)\,d\mu(x), \qquad \int_{\RR^d \times \RR^d} \phi(y) \,d\pi(x,y) =\int_{\RR^d} \phi(y)\,d\nu(y),
\end{equation*}
for all continuous and bounded functions $\phi \in \mathcal{C}_b(\RR^d)$.
\end{Definition}
Note that $\mathcal{P}_p(\RR^d)$, which stands for the set of probability measures with bounded moments of order $p \in [1,\infty)$, endowed with the $p$-Wasserstein distance $d_p$ is a complete metric space. Moreover, we recall the definiton of the push-forward of a measure:
\begin{Definition}
Let $\mu$ be a Borel measure on $\RR^d$ and let $\mathcal{T}:\RR^d\rightarrow \RR^d$ be a measurable map. Then, we define the push-forward of $\mu$ via $\mathcal{T}$ as the measure given by
\begin{equation*}
\mathcal{T} \# \mu(B):=\mu (\mathcal{T}^{-1}(B)),
\end{equation*}
for all Borel sets $B\subset \RR^d$.
\end{Definition}
Furthermore, we define the notion of measure-valued solution to \eqref{kinetic model}.
\begin{Definition}
Let $T>0$ be any given time. We say that $\mu_t\in \mathcal{C}([0,T);\mathcal{M}(\RR^d))$ is a \emph{measure-valued solution} to \eqref{kinetic model} on the time-interval $[0,T)$ if for all $\phi \in \mathcal{C}_c^{\infty} (\RR^d\times [0,T))$ we have that
\begin{equation}
\label{weak form}
\int_0^T\int_{\RR^d}  (\partial_t \phi+F[\mu_{t-\tau(t)}](x)\cdot \nabla_x \phi)\,d\mu_t(x) dt+\int_{\RR^d} \phi(x,0)\,dg_0(x)=0,
\end{equation}
where $F[\mu_{t-\tau(t)}]$ is defined as in \eqref{F-symmetric} or \eqref{F-nonsymmetric}.
\end{Definition}
Let us denote $B^d(0,R)$ the ball of radius $R$ in $\RR^d$ centered at the origin.
In order to prove  existence and uniqueness of solution to the kinetic model \eqref{kinetic model}, we have the following lemma.
\begin{Lemma}
\label{Lipschitz lemma}
Let $\mu_t\in \mathcal{C}([0,T];\mathcal{M}(\RR^d))$ have uniform compact support, i.e.
$$
supp \ \mu_t \subset B^d(0,R),\quad \forall\, t\in [0,T],
$$
for some positive constant $R>0$. Then there exists a constant $K>0$ such that
\begin{equation}
\label{Lipschitz}
|F[\mu_{t-\tau(t)}](x)-F[\mu_{t-\tau(t)}](\tilde{x})|\leq K |x-\tilde{x}|,
\end{equation}
for all $x,\tilde{x}\in B^d(0,R)$ and for all $t\in[0,T]$.

Moreover, there exists a constant $C>0$ such that
\begin{equation}
\label{stimaAlto}
|F[\mu_{t-\tau(t)}](x)|\leq C,
\end{equation}
for all $x\in B^d(0,R)$ and for all $t\in [0,T]$.
\end{Lemma}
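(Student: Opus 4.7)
The plan is to handle the two forms of $F$ separately but by the same basic recipe: exploit the uniform support condition to bound $|x-y|$ by $2R$, use the non-increasing monotonicity of $\psi$ together with $\psi(0)=1$ to bound $\psi$ from above by $1$ and from below by $\psi(2R)>0$ on the relevant set, and combine these with the Lipschitz hypothesis on $\psi$ to control differences.

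For the boundedness \eqref{stimaAlto}, in the symmetric case \eqref{F-symmetric} I would estimate directly
\[
|F[\mu_{t-\tau(t)}](x)| \le \int_{B^d(0,R)} \psi(|x-y|)\,|y-x|\,d\mu_{t-\tau(t)}(y) \le 2R,
\]
using $\psi\le 1$, $|y-x|\le 2R$ and the fact that $\mu_{t-\tau(t)}$ is a probability measure. In the non-symmetric case \eqref{F-nonsymmetric} the same upper bound on the numerator applies, while the denominator is bounded below by $\psi(2R)$ since $|x-y|\le 2R$ and $\psi$ is non-increasing; this yields $|F[\mu_{t-\tau(t)}](x)|\le 2R/\psi(2R)$.

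For the Lipschitz bound \eqref{Lipschitz} in the symmetric case, the clean decomposition is
\[
F[\mu](x)-F[\mu](\tilde x)=\int\bigl[\psi(|x-y|)-\psi(|\tilde x-y|)\bigr](y-x)\,d\mu(y)+\int \psi(|\tilde x-y|)(\tilde x-x)\,d\mu(y),
\]
where I abbreviate $\mu=\mu_{t-\tau(t)}$. The first integrand is bounded by $L\bigl||x-y|-|\tilde x-y|\bigr|\cdot 2R \le 2RL|x-\tilde x|$ via the Lipschitz property of $\psi$ and the reverse triangle inequality, and the second by $|x-\tilde x|$ since $\psi\le 1$. This yields \eqref{Lipschitz} with $K=2RL+1$.

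The non-symmetric case is the main technical point, but no genuine obstacle: writing $F[\mu]=N/D$ with $N(x)=\int\psi(|x-y|)(y-x)\,d\mu(y)$ and $D(x)=\int\psi(|x-y|)\,d\mu(y)$, the identity
\[
\frac{N(x)}{D(x)}-\frac{N(\tilde x)}{D(\tilde x)}=\frac{N(x)-N(\tilde x)}{D(x)}+\frac{N(\tilde x)\bigl(D(\tilde x)-D(x)\bigr)}{D(x)D(\tilde x)}
\]
reduces everything to the estimates already obtained: the symmetric analysis gives $|N(x)-N(\tilde x)|\le (2RL+1)|x-\tilde x|$ and $|D(x)-D(\tilde x)|\le L|x-\tilde x|$, while $|N(\tilde x)|\le 2R$ and $D(x),D(\tilde x)\ge \psi(2R)$. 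Collecting these yields \eqref{Lipschitz} with
\[
K=\frac{2RL+1}{\psi(2R)}+\frac{2RL}{\psi(2R)^2}.
\]
Since both bounds depend only on $R$, $L$ and $\psi(2R)$ and not on $t$, the constants $K$ and $C$ can be taken uniform in $t\in[0,T]$, which completes the proposed proof.
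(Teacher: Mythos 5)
Your proposal is correct and follows essentially the same route as the paper: bound $\psi$ above by $\psi(0)=1$ and below by $\psi(2R)$ on the supports, use the Lipschitz constant $L$ with the reverse triangle inequality for the differences, and in the nonsymmetric case apply the standard quotient decomposition with the denominator bounded below. The only (immaterial) difference is bookkeeping — you keep $(y-x)$ intact and peel off $\psi(|\tilde x-y|)(\tilde x-x)$, whereas the paper separates the $y$ and $x$ contributions and, in the nonsymmetric case, first extracts the $-x$ term before applying the quotient rule — yielding slightly different but equally valid constants.
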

\begin{proof}
In order to prove \eqref{Lipschitz} and \eqref{stimaAlto} we have to distinguish two cases, corresponding to
 $F$ as in \eqref{F-symmetric} or \eqref{F-nonsymmetric}. \newline

 \noindent {\sl Case I ($F$ as in \eqref{F-symmetric})}:
For  any $x,\tilde{x}\in B^d(0,R)$, we have
\begin{align*}
&|F[\mu_{t-\tau(t)}](x)-F[\mu_{t-\tau(t)}](\tilde{x})| \cr
&\quad =\left\vert \int_{\RR^d}\psi (\vert x-y\vert )(y-x) \,d\mu_{t-\tau (t)} (y)-
\int_{\RR^d}\psi (\vert \tilde x-y\vert )(y-\tilde x) \,d\mu_{t-\tau (t)} (y)\right\vert \cr 
&\quad \le
\left\vert \int_{\RR^d} [\psi (\vert x-y\vert )-\psi (\vert \tilde x -y\vert )]\, y\, d\mu_{t-\tau (t)}(y)\right\vert \cr
&\quad \quad  + \left\vert \int_{\RR^d}\psi (\vert x-y\vert )\, x\, d\mu_{t-\tau (t)} (y)
- \int_{\RR^d}\psi (\vert \tilde x-y\vert )\, \tilde x\, d\mu_{t-\tau (t)} (y)\right\vert \cr
& \quad \le  R \, L \vert x-\tilde x\vert
+\left\vert
\int_{\RR^d} [\psi (\vert x-y\vert )-\psi (\vert \tilde x -y\vert )]\, x\, d\mu_{t-\tau (t)}(y)
\right\vert +\vert x-\tilde x\vert \cr
&\quad \le (1+2RL) |x-\tilde{x}|,
\end{align*}
where we have used that $\mu_t$ is a probability measure with support in $B^d(0, R)$ and $\psi (0)=1.$
Then, \eqref{Lipschitz} is proved.
Moreover, from \eqref{F-symmetric}, immediately follows $|F[\mu_{t-\tau(t)}](x)|\leq 2R$ which gives \eqref{stimaAlto}. \newline

\noindent {\sl Case II ($F$ as in \eqref{F-nonsymmetric})}:  Set
 $$\psi^*:= \inf_{y\in B^d(0,2R)} \psi(|y|) >0\,.$$
Then we estimate
\begin{align*}
&|F[\mu_{t-\tau(t)}](x)-F[\mu_{t-\tau(t)}](\tilde{x})| \cr
&\quad \leq \left| \frac{\int_{\RR^d} \psi(\vert x-y\vert ) y\,d\mu_{t-\tau(t)}(y)}{\int_{\RR^d} \psi(\vert x-y\vert ) \,d\mu_{t-\tau(t)}(y)}- \frac{\int_{\RR^d} \psi(\vert \tilde{x}-y \vert ) y\,d\mu_{t-\tau(t)}(y)}{\int_{\RR^d} \psi(\vert \tilde{x}-y\vert ) \,d\mu_{t-\tau(t)}(y)} \right| +|x-\tilde{x}|\cr
&\quad  \leq \frac{\left| \int_{\RR^d} \psi(\vert x-y\vert ) y\,d\mu_{t-\tau(t)}(y) - \int_{\RR^d} \psi(\vert \tilde{x}-y \vert ) y\,d\mu_{t-\tau(t)}(y) \right|}{\left| \int_{\RR^d} \psi(\vert x-y\vert ) \,d\mu_{t-\tau(t)}(y)\right| }\cr
&\qquad + \frac{\left|  \int_{\RR^d} \psi(\vert \tilde{x}-y \vert ) y\,d\mu_{t-\tau(t)}(y)   \right| }{\left|  \int_{\RR^d} \psi(\vert x-y\vert ) \,d\mu_{t-\tau(t)}(y)  \right| \left|  \int_{\RR^d} \psi(\vert \tilde{x}-y\vert )\, d\mu_{t-\tau(t)}(y)   \right|} \cr
&\hspace{2cm} \times \left| \int_{\RR^d} \psi(\vert x-y\vert ) \,d\mu_{t-\tau(t)}(y) - \int_{\RR^d} \psi(\vert \tilde{x}-y\vert ) \,d\mu_{t-\tau(t)}(y) \right| +\vert x-\tilde{x}\vert\cr
&\quad \leq  \left( \frac{2RL}{(\psi^*)^2}+1\right) \vert x-\tilde{x}\vert,
\end{align*}
where we used the fact that $\mu_t$ is compactly supported in $B^d(0,R)$,
$$
\left| \int_{\RR^d} \psi(\vert x-y\vert ) y\,d\mu_{t-\tau(t)}(y) - \int_{\RR^d} \psi(\vert \tilde{x}-y \vert ) y\,d\mu_{t-\tau(t)}(y) \right| \leq RL |x-\tilde{x}|,
$$
and
$$
\left| \int_{\RR^d} \psi(\vert x-y\vert ) \,d\mu_{t-\tau(t)}(y) - \int_{\RR^d} \psi(\vert \tilde{x}-y\vert ) \,d\mu_{t-\tau(t)}(y) \right|\leq L|x-\tilde{x}|.
$$
Finally, from \eqref{F-nonsymmetric} we obtain immediately
$$
|F[\mu_{t-\tau(t)}](x)|\leq R\left(\frac{1}{\psi^*}+1\right),
$$
for all $x\in B^d(0,R)$ and for all $t\in[0,T]$. Thus, we obtain \eqref{stimaAlto}.
\end{proof}
Now, we can prove the following theorem.
\begin{Theorem}
\label{teorema esistenza}
Consider the kinetic model \eqref{kinetic model}, with $g(t)\in \mathcal{C}([-\tau(0),0];\mathcal{M}(\RR^d))$, and suppose that there exists a constant $R>0$ such that
\begin{equation*}
supp \ g_t \subset B^d(0,R),
\end{equation*}
for all $t\in [-\tau(0),0]$. Then for any $T>0$ there exists a unique measure-valued solution $\mu_t\in \mathcal{C}([0,T);\mathcal{M}(\RR^d))$ of \eqref{kinetic model} in the sense of \eqref{weak form}. Moreover, $\mu_t$ is uniformly compactly supported in position and we have that
\begin{equation}
\label{push-forward mu}
\mu_t=X(t;\cdot) \# \mu_0,
\end{equation}
where $X(t;\cdot)$ is the flow map generated by $F[\mu_{t-\tau(t)}]$ in phase space.
\end{Theorem}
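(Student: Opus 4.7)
The plan is to exploit the positive lower bound $\tau(t)\geq\tau^*>0$ on the delay to run a \emph{method of steps} argument, reducing the nonlocal delay problem on $[0,T)$ to a finite sequence of linear transport problems on intervals of length $\tau^*$. On each such interval the velocity field is \emph{explicitly known} from the previous step, and Lemma~\ref{Lipschitz lemma} gives it the regularity needed to define a classical flow.

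First I would fix $T>0$ and set $T_k:=k\tau^*\wedge T$, and work inductively on the intervals $[T_{k-1},T_k]$. For the base step $k=1$, any $t\in[0,\tau^*]$ satisfies $t-\tau(t)\le 0$, so the vector field reduces to $F[g_{t-\tau(t)}](x)$, which depends only on the prescribed history $g$ and therefore is a fixed, time-dependent, $x$-Lipschitz field by Lemma~\ref{Lipschitz lemma} (taking $R$ to be the support radius of $g_s$ for $s\in[-\tau(0),0]$). I would then define $X(t;x)$ on $[0,\tau^*]$ as the unique Carathéodory solution of
\[
\frac{d}{dt}X(t;x) = F[g_{t-\tau(t)}](X(t;x)), \qquad X(0;x)=x,
\]
and set $\mu_t:=X(t;\cdot)\#\mu_0$. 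Because $|F[g_{t-\tau(t)}]|\le C$ uniformly, the support of $\mu_t$ stays inside $B^d(0,R+C\tau^*)$, so the new radius is controlled.

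Next I would verify that $\mu_t$ so defined actually solves the weak formulation \eqref{weak form} on $[0,\tau^*]$. This is a standard computation: for $\phi\in\mathcal{C}_c^\infty(\RR^d\times[0,\tau^*))$,
\[
\frac{d}{dt}\int_{\RR^d}\phi(x,t)\,d\mu_t(x)=\int_{\RR^d}\bigl(\partial_t\phi+F[g_{t-\tau(t)}]\cdot\nabla_x\phi\bigr)(X(t;x),t)\,d\mu_0(x),
\]
which upon pushing forward and integrating in $t$ yields \eqref{weak form}. Uniqueness on $[0,\tau^*]$ follows from the fact that any two measure-valued solutions with the same initial datum and the same (prescribed) velocity field must coincide with the unique push-forward of $\mu_0$ along the flow: this can be proved via a duality argument using the backward transport equation $\partial_t\varphi+F\cdot\nabla_x\varphi=0$ with final datum, or equivalently by a Gronwall estimate on the Wasserstein distance $d_1$ between two candidate solutions (the Lipschitz bound on $F$ furnishes the contraction constant).

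For the inductive step, assume $\mu_t$ has been constructed and is uniformly compactly supported on $[0,T_{k-1}]$. Then for $t\in[T_{k-1},T_k]$ the argument $t-\tau(t)$ lies in $[-\tau(0),T_{k-1}]$, so $F[\mu_{t-\tau(t)}]$ is again a fixed $x$-Lipschitz field by Lemma~\ref{Lipschitz lemma} applied with the updated radius. Repeating the flow construction and the verification of \eqref{weak form} extends $\mu_t$ to $[0,T_k]$, preserving the representation \eqref{push-forward mu} (since the flow on $[T_{k-1},T_k]$ composes with the previously constructed flow). After $\lceil T/\tau^*\rceil$ steps the solution exists on $[0,T)$.

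The step I expect to be the only mildly delicate one is the uniqueness/consistency across the junction times $T_k$ and the verification that the weak formulation \eqref{weak form} is equivalent to the Lagrangian identity $\mu_t=X(t;\cdot)\#\mu_0$ at the level of measure-valued solutions; this is where one must be careful to choose test functions peaked near $T_k$ and to exploit the uniform Lipschitz and boundedness bounds of Lemma~\ref{Lipschitz lemma} on each sub-interval. Everything else is a routine iteration of the DiPerna--Lions / characteristics approach adapted to the delayed setting.
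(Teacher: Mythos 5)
Your proposal follows essentially the same route as the paper: a method-of-steps iteration on intervals of length $\tau^*$, on each of which the delayed velocity field is already determined by the previous step, combined with Lemma \ref{Lipschitz lemma} to build the characteristic flow and define $\mu_t$ as a push-forward (the paper outsources the local well-posedness and the equivalence of \eqref{weak form} with \eqref{push-forward mu} to \cite{CCR} rather than reproving uniqueness by duality or a Gronwall estimate in $d_1$). The only substantive difference is the support control: where you settle for the linear growth $R+C\tau^*$ per step, the paper derives $\frac{d|X(t;x)|}{dt}\leq R_X^{t-\tau(t)}-|X(t;x)|$ and runs a continuity argument as in Lemma \ref{Lemma iniziale} to conclude that the support never leaves $B^d(0,R)$ --- a confinement that your weaker bound does not give but which is what keeps the constants in the subsequent consensus theorem independent of $t$ and $N$.
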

\begin{proof}
First we observe that by Lemma \ref{Lipschitz lemma} together with \cite[Theorem 3.10]{CCR} we have local-in-time existence and uniqueness of a measure-valued solution to \eqref{kinetic model} in the sense of \eqref{weak form}. Moreover, this solution exists as long as it is compactly supported in position. Hence, in order to prove the global-in-time existence and uniqueness of solutions to the continuum model \eqref{kinetic model}, we need to estimate the growth of support of $\mu_t.$ So, we set
$$
R_X[\mu_t]:=\max_{x\in \overline{supp \ \mu_t}} |x| \quad \text{for} \quad t\in[0,T].
$$
Moreover, we define
$$
R_X(t):=\max_{-\tau(0)\leq s\leq t} R_X[\mu(s)].
$$
We proceed by steps. Consider $t\in[0,\tau^*]$ and observe then that $t-\tau(t)\in [-\tau(0),0]$. We consider the system of characteristics $X(t;x):[0,\tau^*]\times \RR^d \rightarrow\RR^d$ associated to \eqref{kinetic model}
\begin{equation}
\label{characteristics}
\frac{dX(t;x)}{dt}=F[\mu_{t-\tau(t)}](X(t;x)),
\end{equation}
subject to the initial condition
\begin{equation}
\label{caratteristiche_iniziali}
X(0;x)=x, \quad \text{for} \ x\in \RR^d.
\end{equation}
Then, by Lemma \ref{Lipschitz lemma} there exists a unique solution to \eqref{characteristics}-\eqref{caratteristiche_iniziali} on the time interval $[0,\tau^*]$. Now, by definition of $F[\mu_{t-\tau(t)}]$, choosing either \eqref{F-symmetric} or \eqref{F-nonsymmetric} yields
$$
\frac{d|X(t;x)|}{dt}\leq R_X^{t-\tau(t)}-|X(t;x)|.
$$
Using a continuity argument as in Lemma \ref{Lemma iniziale}, we obtain
$$
R_X^t<R_X^0,
$$
for $t\in[0,\tau^*]$.
Thus, we can construct a unique solution $\mu_t$ to \eqref{characteristics}-\eqref{caratteristiche_iniziali} on the time interval $[0,\tau^*]$, and this solution is compacty supported in the $x$-variable. We can iterate this argument on all the intervals of length $\tau^*$, namely on the intervals of type $[k\tau^*,(k+1)\tau^*]$, with $k=1,2,\ldots$, until we reach the final time $T$. Indeed, note that if $t\in [N\tau^*,(N+1)\tau^*]$, for some $N>0$, then $t-\tau(t)\in [-\tau(0),N\tau^*]$. Moreover, arguing as in \cite{CCR}, we can find \eqref{push-forward mu} and we have that this formulation is equivalent to \eqref{weak form}.
\end{proof}
In order to prove the consensus behavior of the solution to the kinetic model \eqref{kinetic model}, we need a stability estimate.
\begin{Theorem}\label{stability theorem}
Let $\mu^1_t,\mu^2_t\in\mathcal{C}([0,T);\mathcal{M}(\RR^d))$ be two weak solutions to \eqref{kinetic model}, subject to uniformly compactly supported initial data $g^1_s,g^2_s\in \mathcal{C}([-\tau(0),0];\mathcal{M}(\RR^d))$, respectively. Then, there exists a constant $C>0$, depending on $T,$ such that
\begin{equation}
\label{stability}
d_p(\mu^1_t,\mu^2_t)\leq C \max_{s\in [-\tau(0),0]} d_p (g^1_s,g^2_s),
\end{equation}
for all $p \in [1,\infty]$ and $t\in [0,T)$.
\end{Theorem}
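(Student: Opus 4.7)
The plan is a Dobrushin-type coupling argument exploiting the characteristic representation $\mu^i_t=X^i(t;\cdot)\#g^i_0$ provided by Theorem~\ref{teorema esistenza}. For $p\in[1,\infty)$ I would fix an optimal plan $\pi_0^\star\in\Pi(g^1_0,g^2_0)$ for $d_p$ and transport it along the two flows: $(X^1(t;\cdot),X^2(t;\cdot))\#\pi_0^\star$ is a coupling of $\mu^1_t,\mu^2_t$, so setting
$$
Q(t):=\int_{\RR^d\times \RR^d}|X^1(t;x)-X^2(t;y)|^p\,d\pi_0^\star(x,y),\qquad \phi(t):=Q(t)^{1/p},
$$
one has $d_p(\mu^1_t,\mu^2_t)\le\phi(t)$. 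For $p=\infty$ I would use instead $\phi(t):=\sup_{(x,y)\in\,\mathrm{supp}\,\pi_0^\star}|X^1(t;x)-X^2(t;y)|$ with $\pi_0^\star$ optimal (or nearly so) for $d_\infty$.

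Differentiating $\phi$ along the characteristics, with the shorthand $s:=t-\tau(t)$, I would use the splitting
$$
F[\mu^1_s](X^1)-F[\mu^2_s](X^2) = \bigl(F[\mu^1_s](X^1)-F[\mu^1_s](X^2)\bigr) + \bigl(F[\mu^1_s](X^2)-F[\mu^2_s](X^2)\bigr).
$$
Lemma~\ref{Lipschitz lemma} controls the first summand by $K|X^1-X^2|$. For the second, testing against an arbitrary coupling of $\mu^1_s,\mu^2_s$ and exploiting the Lipschitz continuity of $\psi$, the uniform compact support of the solutions, and (in the non-symmetric case) the lower bound $\psi^\star>0$ on the denominator, a direct computation of the same flavor as the one in Lemma~\ref{Lipschitz lemma} yields
$$
|F[\mu^1_s](z)-F[\mu^2_s](z)|\le C_1\,d_1(\mu^1_s,\mu^2_s)\le C_1\,d_p(\mu^1_s,\mu^2_s),
$$
uniformly for $z\in B^d(0,R)$. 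A H\"older step absorbs the residual factor $|X^1-X^2|^{p-1}$ arising in $Q'(t)$ into $Q^{(p-1)/p}$, and one is left with the delay differential inequality
$$
D^+\phi(t)\le K\,\phi(t)+C_1\,d_p(\mu^1_{t-\tau(t)},\mu^2_{t-\tau(t)}),\qquad t\in[0,T).
$$

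To close the argument I would extend $\phi$ to $[-\tau(0),0]$ by $\phi(s):=d_p(g^1_s,g^2_s)$ and introduce the running maximum $u(t):=\max_{-\tau(0)\le s\le t}\phi(s)$. Since $\phi(t-\tau(t))\le u(t)$, integration produces $u(t)\le u(0)+(K+C_1)\int_0^t u(s)\,ds$ on $[0,T)$, whence Gronwall gives $u(t)\le e^{(K+C_1)T}\max_{s\in[-\tau(0),0]}d_p(g^1_s,g^2_s)$, and from $d_p(\mu^1_t,\mu^2_t)\le\phi(t)\le u(t)$ one recovers \eqref{stability} with $C:=e^{(K+C_1)T}$. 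The main obstacles I anticipate are (i) the measure-stability estimate for the non-symmetric velocity field, which requires a careful splitting of the ratio in the spirit of Case II in Lemma~\ref{Lipschitz lemma}, and (ii) the treatment of the delay, since $t-\tau(t)$ may land in $[-\tau(0),0]$ and the differential inequality therefore has to be anchored on the initial history; the running-maximum substitution is what reconciles these two regimes into a single Gronwall estimate.
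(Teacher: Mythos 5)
Your proposal is correct, and it reaches \eqref{stability} by a genuinely different route than the paper, so a comparison is in order. The paper also works along the characteristics, but it couples the two solutions through an optimal transport \emph{map} $\mathcal{S}_0$ between $\mu^1_0$ and $\mu^2_0$, propagated as $\mathcal{T}^t=X^2(t;\cdot)\circ\mathcal{S}_0\circ X^1(t;\cdot)^{-1}$, and it estimates the whole difference $|F[\mu^1_{t-\tau(t)}](x)-F[\mu^2_{t-\tau(t)}](\mathcal{T}^t(x))|$ at once by rewriting the $\mu^2_{t-\tau(t)}$-integral as an integral against $\mathcal{T}^{t-\tau(t)}\#\mu^1_{t-\tau(t)}$; this produces $u_p'(t)\le Cp\,(u_p(t)+u_p(t-\tau(t)))$ with a constant proportional to $p$, which is then closed by the substitution $w_p(t)=e^{-Cpt}u_p(t)$ and an induction over the intervals $[k\tau^*,(k+1)\tau^*]$ (this is where the lower bound $\tau(t)\ge\tau^*$ enters), the case $p=\infty$ being recovered only in the limit $(1+Cp\tau^*)^{K/p}\to 1$. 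You instead (i) couple by an optimal \emph{plan} $\pi_0^\star$, which is the more robust choice --- an optimal map between two arbitrary compactly supported probability measures need not exist (e.g.\ when $g^1_0$ has atoms, as it does for the empirical measures used later in the mean--field limit), so your coupling quietly removes a small existence issue in the paper's construction --- and (ii) split the velocity difference into the spatial Lipschitz term of Lemma \ref{Lipschitz lemma} plus a Kantorovich--Rubinstein term bounded by $C_1 d_1\le C_1 d_p$. After the H\"older absorption this yields a delay inequality with $p$-independent constants, so your running-maximum Gronwall closes all $p\in[1,\infty]$ simultaneously, needs neither the $\tau^*$-induction nor the limiting argument for $d_\infty$ (only the usual Dini-derivative care when differentiating the supremum over $\mathrm{supp}\,\pi_0^\star$), and even dispenses with the lower bound on $\tau$ at this step. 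What the paper's bookkeeping buys is that every quantity stays expressed as a single integral against $\mu^1$, mirroring the computation of \cite{CH}; what yours buys is generality and a cleaner, $p$-uniform constant $C=e^{(K+C_1)T}$.
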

\begin{proof}
Let $p \in [1,\infty)$ and we construct again the system of characteristics $X^i(t;x):[0,T]\times \RR^d\rightarrow\RR^d$, for $i=1,2,$
$$
\begin{array}{l}
\displaystyle{ \frac{dX^i(t;x)}{dt}=F[\mu^i_{t-\tau(t)}](X^i(t;x)),}\\
\displaystyle{ X^i(0;x)=x,}
\end{array}
$$
for all $x\in \RR^d$. By Theorem \ref{teorema esistenza}, we know that the measures $\mu^i_t$ have uniformly compact support for $t\in[0,T]$. Hence, the flows $X^i$ are well-defined on this interval. Then, arguing as in \cite{CCR}, we can find that $\mu^i_t=X^i(t;\cdot)\#\mu^i_s$, for any $t,s\in[0,T]$. Moreover, as before, we define
$$
R_{i;X}^T:=\max_{-\tau(0) \leq s\leq t} R_X[\mu^i_s].
$$
We choose an optimal transport map $\mathcal{S}_0(x)$ between $\mu^1_0$ and $\mu^2_0$ with respect to the $p$-Wasserstein distance $d_p$, namely
$$
\begin{array}{l}
\displaystyle{
\mu^2_0=\mathcal{S}_0\#\mu^1_0,}\\
\displaystyle{ d_p(\mu^1_0,\mu^2_0)=\lt(\int_{\RR^d} |x-\mathcal{S}_0(x)|^p\,d\mu^1_0(x)\rt)^{1/p}.}
\end{array}
$$
Furthermore, defining $\mathcal{T}^t:=X^2(t;\cdot)\circ \mathcal{S}_0\circ X^1(t;\cdot)^{-1}$, for $t\in [0,T]$, and using the definition of push-forward, we obtain
\begin{equation}
\label{push}
\mathcal{T}^t\#\mu^1_t=\mu^2_t, \qquad \forall t\in [0,T],
\end{equation}
and
$$
d_p(\mu^1_t,\mu^2_t)\leq \lt(\int_{\RR^d} |x-\mathcal{T}^t(x)|^p\,d\mu^1_t(x)\rt)^{1/p}.
$$
We define
$$u_p(t):=\int_{\RR^d} |x-\mathcal{T}^t(x)|^p\,d\mu^1_t(x), \quad t\in [0, T].$$
Hence, we obtain
\[
d_p(\mu^1_t,\mu^2_t)\leq (u_p(t))^{1/p}, \qquad \forall t\in [0,T].
\]
Moreover, using the fact that $\mathcal{T}^t\circ X^1(t;\cdot)=X^2(t;\cdot)\circ \mathcal{S}_0$, we can rewrite $u_p(t)$ as
$$
u_p(t)=\int_{\RR^d} |X^1(t;x)-X^2(t;\mathcal{S}_0(x))|^p\,d\mu^1_0(x).
$$
We extend the definition of $\mathcal{T}^t$ on the interval $[-\tau(0),0)$ as the optimal transport map between $g^1_t$ and $g^2_t$, and we extend $u_p(t)$ on the same interval, namely
$$
(u_p(t))^{1/p}:=d_p(g^1_t,g^2_t)=\lt(\int_{\RR^d} |x-\mathcal{T}^t(x)|^p\,dg^1_t(x)\rt)^{1/p},
$$
for $t\in [-\tau(0),0]$.
We have that
\begin{align*}
\frac{du_p(t)}{dt}&\leq p\int_{\RR^d} |X^1(t;x)-X^2(t;\mathcal{S}_0(x))|^{p-1}\cr
&\hspace{2cm} \times |F[\mu^1_{t-\tau(t)}](X^1(t;x))-F[\mu^2_{t-\tau(t)}](X^2(t;\mathcal{S}_0(x)))|\,d\mu^1_0(x)\cr
&=:J.
\end{align*}
Using \eqref{push}, we can rewrite $J$ as follows:
$$
J=p\int_{\RR^d} |x-\mathcal{T}^t(x)|^{p-1}|F[\mu^1_{t-\tau(t)}](x)-F[\mu^2_{t-\tau(t)}](\mathcal{T}^t(x))|\,d\mu^1_t(x).
$$
We consider, now, $F$ as in \eqref{F-symmetric}. In this case we have that
\begin{align*}
&|F[\mu^1_{t-\tau(t)}](x)-F[\mu^2_{t-\tau(t)}](\mathcal{T}^t(x))|\cr
&\quad \leq \int_{\RR^d} \bigl| \psi(|x-y|)(y-x)-\psi (|\mathcal{T}^{t-\tau(t)}(y)-\mathcal{T}^t(x)|)(\mathcal{T}^{t-\tau(t)}(y)-\mathcal{T}^t(x)) \bigr| \,d\mu^1_{t-\tau(t)}(y)\cr
&\quad \leq \int_{\RR^d} \bigl| \psi(|x-y|)-\psi(|\mathcal{T}^{t-\tau(t)}(y)-\mathcal{T}^t(x)|)\bigr| \cdot |y-x| \,d\mu^1_{t-\tau(t)}(y)\cr
&\qquad +\int_{\RR^d} |\psi (|\mathcal{T}^{t-\tau(t)}(y)-\mathcal{T}^t(x)|)|\cdot|y-x-(\mathcal{T}^{t-\tau(t)}(y)-\mathcal{T}^t(x))|\,d\mu^1_{t-\tau(t)}(y).
\end{align*}
The first term can be bounded as follows:
\begin{align*}
&\int_{\RR^d} \bigl| \psi(|x-y|)-\psi(|\mathcal{T}^{t-\tau(t)}(y)-\mathcal{T}^t(x)|)\bigr| |y-x| \,d\mu^1_{t-\tau(t)}(y)\cr
&\quad \le L (|x|+R_x^1) \int_{\RR^d} \bigl| \ |y-x|-|\mathcal{T}^{t-\tau(t)}(y)-\mathcal{T}^t(x)|\ \bigr| d\mu^1_{t-\tau(t)}(y)\cr
&\quad \le  L (|x|+R_x^1) \lt(|x-\mathcal{T}^t(x)|+\int_{\RR^d} |y-\mathcal{T}^{t-\tau(t)}(y)|\,d\mu^1_{t-\tau(t)}(y)\rt).
\end{align*}
The second term is bounded by
\begin{align*}
&\int_{\RR^d} |\psi (|\mathcal{T}^{t-\tau(t)}(y)-\mathcal{T}^t(x)|)|\cdot|y-x-(\mathcal{T}^{t-\tau(t)}(y)-\mathcal{T}^t(x))|\,d\mu^1_{t-\tau(t)}(y)\cr
&\quad \leq \int_{\RR^d} (|x-\mathcal{T}^t(x)|+|y-\mathcal{T}^{t-\tau(t)}(y)|)\,d\mu^1_{t-\tau(t)}(y)\cr
&\quad \leq |x-\mathcal{T}^t(x)|+\int_{\RR^d} |y-\mathcal{T}^{t-\tau(t)}(y)|\,d\mu^1_{t-\tau(t)}(y).
\end{align*}
Hence, there exists a constant $C>0$ depending  on the Lipschitz constant $L$  of $\psi$ and on $R_{i,X}^T$ such that
\begin{align*}
J&\leq Cp\int_{\RR^d} |x-\mathcal{T}^t(x)|^p\,d\mu^1_t(x)+Cp\int_{\RR^d} |x-\mathcal{T}^t(x)|^{p-1}\,d\mu^1_t(x)\int_{\RR^d} |y-\mathcal{T}^{t-\tau(t)}(y)|\,d\mu^1_{t-\tau(t)}(y) \cr
&\leq Cpu_p(t)+Cu_p(t-\tau(t)).
\end{align*}
Here we used
\begin{align*}
&\int_{\RR^d} |x-\mathcal{T}^t(x)|^{p-1}\,d\mu^1_t(x)\int_{\RR^d} |y-\mathcal{T}^{t-\tau(t)}(y)|\,d\mu^1_{t-\tau(t)}(y) \cr
&\quad \leq \lt(\int_{\RR^d} |x-\mathcal{T}^t(x)|^p\,d\mu^1_t(x)\rt)^{(p-1)/p}\lt(\int_{\RR^d} |y-\mathcal{T}^{t-\tau(t)}(y)|^p\,d\mu^1_{t-\tau(t)}(y)\rt)^{1/p}\cr
&\quad \leq \frac{p-1}{p}\lt(\int_{\RR^d} |x-\mathcal{T}^t(x)|^p\,d\mu^1_t(x)\rt) + \frac1p \lt(\int_{\RR^d} |y-\mathcal{T}^{t-\tau(t)}(y)|^p\,d\mu^1_{t-\tau(t)}(y)\rt).
\end{align*}
Now, if we take $F$ as in \eqref{F-nonsymmetric}, we have that
\begin{align*}
&|F[\mu^1_{t-\tau(t)}](x)-F[\mu^2_{t-\tau(t)}](\mathcal{T}^t(x))|\cr
&\quad \leq\frac{1}{\psi(R_{1;X}^T)}\lt| \int_{\RR^d} \psi(|x-y|)(y-x)\,d\mu^1_{t-\tau(t)}(y)-\int_{\RR^d}\psi(|\mathcal{T}^t(x)-y|)(y-\mathcal{T}^t(x))\,d\mu^2_{t-\tau(t)}(y)\rt|\cr
&\qquad +\frac{1}{\psi(R_{1;X}^T)\psi(R_{2;X}^T)}\lt| \int_{\RR^d} \psi(|\mathcal{T}^t(x)-y|)(y-\mathcal{T}^t(x))\,d\mu^2_{t-\tau(t)}(y)\rt|\cr
&\hspace{5cm}\times \lt| \int_{\RR^d} \psi (|x-y|)\,d\mu^1_{t-\tau(t)}(y)-\int_{\RR^d} \psi (|\mathcal{T}^t(x)-y|)\,d\mu^2_{t-\tau(t)}(y)\rt|.
\end{align*}
For the first term, we have a similar estimate as before, namely
\begin{align*}
& \lt| \int_{\RR^d} \psi(|x-y|)(y-x)\,d\mu^1_{t-\tau(t)}(y)-\int_{\RR^d}\psi(|\mathcal{T}^t(x)-y|)(y-\mathcal{T}^t(x))\,d\mu^2_{t-\tau(t)}(y)\rt|\cr
&\quad \leq \lt((|x|+R_X^1)||\psi||_{Lip}+1\rt)\lt(|x-\mathcal{T}^t(x)|+\int_{\RR^d} |y-\mathcal{T}^{t-\tau(t)}(y)|\,d\mu^1_{t-\tau(t)}(y)\rt).
\end{align*}
Moreover,
$$
\lt| \int_{\RR^d} \psi(|\mathcal{T}^t(x)-y|)(y-\mathcal{T}^t(x))\,d\mu^2_{t-\tau(t)}(y)\rt|\leq R_{2;X}^T+|\mathcal{T}^t(x)|,
$$
and
\begin{align*}
&\lt| \int_{\RR^d} \psi (|x-y|)\,d\mu^1_{t-\tau(t)}(y)-\int_{\RR^d} \psi (|\mathcal{T}^t(x)-y|)\,d\mu^2_{t-\tau(t)}(y)\rt|\cr
&\quad \leq \int_{\RR^d} |\psi(|x-y|)-\psi(|\mathcal{T}^t(x)-\mathcal{T}^{t-\tau(t))}(y)|)|\,d\mu^1_{t-\tau(t)}(y)\cr
&\quad \leq L \lt(|x-\mathcal{T}^t(x)|+\int_{\RR^d} |y-\mathcal{T}^{t-\tau(t)}(y)|\,d\mu^1_{t-\tau(t)}(y)\rt).
\end{align*}
Hence, as before, there exists a constant $C>0$, which is independent of $p$, such that
$$
\frac{du_p(t)}{dt}\leq Cp(u_p(t)+u_p(t-\tau(t)),
$$
for all $t\in [0,T]$, due to $p \geq 1$. Now, we denote
$$
(\bar{u}_p)^{1/p}:=\max_{s\in[-\tau(0),0]} (u_p(s))^{1/p}=\max_{s\in [-\tau(0),0]}d_p(g^1_s,g^2_s),
$$
and set $w_p(t):=e^{-Cpt}u_p(t)$. Hence, we have that
$$
\frac{dw_p(t)}{dt}\leq Cpw_p(t-\tau(t)).
$$
Consider $t\in [0,\tau^*]$. Since $t-\tau(t)\in [-\tau(0),0]$, then by comparison principle we have
$$
w_p(t)\leq \bar{u}_p (1+Cp\tau^*).
$$
Inductively, we can prove that for any $t\in ((k-1)\tau^*,k\tau^*]$, with $k=1,2,\ldots$, until we reach $T$, we obtain
$$
w_p(t)\leq \bar{u}_p(1+Cp\tau^*)^k,
$$
i.e.
$$
u_p(t)\leq \bar{u}_pe^{Cpt}(1+Cp\tau^*)^k \leq \bar{u}_pe^{CpT}(1+Cp\tau^*)^K,
$$
where $K$ is the natural number such that $T\in ((K-1)\tau^*, K\tau^*].$
Hence, we obtain \eqref{stability}, just recalling that
\[
d_p(\mu^1_t,\mu^2_t)\leq (u_p(t))^{1/p} \leq (\bar{u}_p)^{1/p}e^{CT}(1+Cp\tau^*)^{\frac K p}
\]
for any $t\in [0,T]$ and $p \in [1,\infty)$. Furthermore, since
\[
(\bar{u}_p)^{1/p} \to \max_{s\in [-\tau(0),0]}d_\infty(g^1_s,g^2_s) \quad \mbox{and} \quad
(1+Cp\tau^*)^{\frac K p}\rightarrow 1 \mbox{ as } p \to \infty,
\]
we also have
\[
d_\infty(\mu^1_t,\mu^2_t) \leq C\max_{s\in [-\tau(0),0]}d_\infty(g^1_s,g^2_s),
\]
for any $t\in [0,T]$.
\end{proof}
This stability result is useful in order to have a rigorous passage from the particle model \eqref{modello}-\eqref{modello,iniziale} to the continuum equation \eqref{kinetic model}. Indeed, fix $g(s)\in \mathcal{C}([-\tau(0),0];\mathcal{M}(\RR^d))$, with compact support, namely $supp \ g_s \subset B^d(0,R)$ for some $R>0$ and for all $s\in [-\tau(0),0]$. Consider a family of $N$-particle approximations of $g(s)$, $\{g^N_s\}_{N\in \nat}$, i.e.
$$
g^N_s:=\frac1N\sum_{i=1}^N \delta(x-x_{i,0}(s))
$$
for $s\in[-\tau(0),0]$, where $x_{i,0}\in \mathcal{C}([-\tau(0),0];\RR^d)$ are chosen such that
\begin{equation}\label{stima distanza iniziale}
\max_{s\in [-\tau(0),0]} d_p(g_s,g^N_s)\rightarrow 0 \qquad \text{as} \qquad N\rightarrow +\infty.
\end{equation}
Now let $\{ x_i^N \}_{N\in \nat}$ be the solution to the discrete model \eqref{modello}, with inital conditions given by
$$
x_i(s)=x_i^0(s),\qquad \forall s\in [-\tau(0),0].
$$
Moreover, let
\begin{equation}\label{ftN}
\mu^N_t :=\frac1N\sum_{i=1}^N  \delta (x-x_i^N(t)), \qquad \forall t\in [0,T].
\end{equation}
Then, we have that $\mu^N_t$ is a measure-valued solution to the kinetic model \eqref{kinetic model}, in the sense of \eqref{weak form}. Moreover, if $\mu_t\in \mathcal{C}([0,T);\mathcal{M}(\RR^d))$ is a weak solution to \eqref{weak form} with initial datum $g_s$, then according to Theorem \ref{stability theorem} there exists a constant $C>0$, depending only on $\psi$, $R$ and $T$, such that
$$
d_p(\mu_t,\mu^N_t)\leq C \max_{s\in [-\tau(0),0]} d_p(g_s,g^N_s),
$$
for all $t\in[0,T)$. This means that $\mu^N_t$ is an approximation of $\mu_t$, namely $\mu^N_t\rightarrow \mu_t$, uniformly on $[0,T)$, as $N\rightarrow +\infty$. This gives us a convergence result of the solution of \eqref{kinetic model} to consensus. Indeed, we define the position diameter for compactly supported measure $g\in \mathcal{M}(\RR^d)$ as follows:
$$
d_X[g]:=\text{diam} (supp \ g).
$$
Hence, we have the following theorem.
\begin{Theorem}
Let $\mu_t \in \mathcal{C}([0,T);\mathcal{M}(\RR^d))$ be a solution to \eqref{kinetic model}, in the sense of \eqref{weak form} on the time interval $[0,T)$ with compactly supported initial datum $g_s\in \mathcal{C}([-\tau(0),0];\mathcal{M}(\RR^d))$. Moreover consider $F$ as in \eqref{F-symmetric} or \eqref{F-nonsymmetric}. If
\begin{equation}
\label{stima tau barrato}
\overline \tau <\ln \left (
1+ \frac {\psi^3 (2R)} {2 + \psi^2 (2R)} \, \frac{1-c} \lambda
\right ) ,
\end{equation}
then, $\mu_t$ satisfies
\begin{equation}
\label{flocking_kinetic}
d_X[\mu_t]\leq \left( \max_{s\in[-\tau(0),0]} d_X[g_s]\right) e^{-Ct},
\end{equation}
where $C$ is a positive constant independent of $t$.
\end{Theorem}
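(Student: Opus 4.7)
The plan is to transfer the exponential consensus estimate from the particle model to the continuum model by approximation, using the stability estimate of Theorem \ref{stability theorem} and, crucially, the fact that the decay rate and prefactor in Theorem \ref{flocking particle} do not depend on the number $N$ of agents. I would approximate $g_s$ by empirical measures
\[
g^N_s=\frac{1}{N}\sum_{i=1}^{N}\delta_{x_{i,0}(s)},
\]
uniformly supported in $B^d(0,R)$, such that $\max_{s\in[-\tau(0),0]} d_\infty(g_s,g^N_s)\to 0$ as $N\to\infty$, and chosen so that the maps $s\mapsto x_{i,0}(s)$ are Lipschitz on $[-\tau(0),0]$ with constants bounded uniformly in $N$ (e.g.\ via a quantile-type construction). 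Letting $\{x_i^N(t)\}$ be the solution to \eqref{modello}-\eqref{modello,iniziale} with these initial data, the empirical measure $\mu^N_t$ defined as in \eqref{ftN} is a measure-valued solution of \eqref{kinetic model}.

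Next, under the smallness assumption \eqref{stima tau barrato}, Theorem \ref{flocking particle} applied to each discrete system produces a decay
\[
d_X[\mu^N_t]=\max_{i,j}|x_i^N(t)-x_j^N(t)|\leq C_N\, e^{-\gamma t}, \qquad t\ge 0,
\]
where $\gamma>0$ depends only on $\lambda$, $\psi$, $R$, $c$ and $\overline\tau$, and $C_N$ can be read from the initial value of the Lyapunov functional $F$ used in Theorem \ref{flocking particle}. The uniform bound $d_X[g^N_0]\le 2R$ and the uniform-in-$N$ Lipschitz control on $x_{i,0}(\cdot)$ keep $F(0)$ bounded independently of $N$, hence $\sup_N C_N\le \bar C<\infty$ and
\[
d_X[\mu^N_t]\leq \bar C\, e^{-\gamma t} \quad \text{for all } N\ \text{and all }t\ge 0.
\]

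Finally, applying Theorem \ref{stability theorem} with $p=\infty$ on any interval $[0,T]$,
\[
d_\infty(\mu_t,\mu^N_t)\leq C(T)\max_{s\in[-\tau(0),0]} d_\infty(g_s,g^N_s)\longrightarrow 0 \quad \text{as } N\to\infty,
\]
for each fixed $t\in[0,T)$. Since for compactly supported probability measures one has the elementary inequality
\[
d_X[\mu_t]\le d_X[\mu^N_t]+2\,d_\infty(\mu_t,\mu^N_t),
\]
(any two points in $\mathrm{supp}\,\mu_t$ are within $d_\infty$ of points in $\mathrm{supp}\,\mu^N_t$), passing to the limit $N\to\infty$ with $t$ fixed yields $d_X[\mu_t]\le \bar C\, e^{-\gamma t}$, which is \eqref{flocking_kinetic}. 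Note that although $C(T)$ from stability may blow up in $T$, the final bound is $T$-independent because $t$ is fixed before sending $N\to\infty$.

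The main obstacle is the uniform-in-$N$ control of $C_N$, which in turn is an issue about the regularity of the particle approximation of the initial datum $g_s$: one must choose $x_{i,0}(\cdot)$ so that the delay integral of $\max_j|\dot x_{i,0}^N(\sigma)|$ entering $F(0)$ stays bounded as $N\to\infty$. Once such an approximation is constructed, the rest of the argument is a standard Wasserstein limit using Theorems \ref{flocking particle} and \ref{stability theorem}.
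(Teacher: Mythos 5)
Your proposal is correct and follows essentially the same route as the paper: approximate $g_s$ by empirical measures, apply the $N$-independent particle consensus estimate of Theorem \ref{flocking particle} to the approximating systems, and pass to the limit $N\to\infty$ using the stability estimate of Theorem \ref{stability theorem}. The only difference is that you explicitly flag (and resolve) the need for uniform-in-$N$ control of the Lyapunov prefactor via Lipschitz regularity of the approximating initial curves, a point the paper passes over by directly asserting the particle decay with prefactor $\max_{s\in[-\tau(0),0]} d_X(s)$.
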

\begin{proof}
As before, we construct the family of $N$-particle approximations of $g_s$, $\{ g^N_s\}_{N\in\nat}$, i.e.
$$
g^N_s:=\frac1N\sum_{i=1}^N \delta (x-x_i^0(s))
$$
for $s\in [-\tau(0),0]$, where $x_i^0\in \mathcal{C}([-\tau(0),0];\RR^d)$ satisfy \eqref{stima distanza iniziale}. Let $\{ x_i^N\}$ be the solution to \eqref{modello}, subject to the initial data $x_i(s)=x_i^0(s)$, for $s\in[-\tau(0),0]$. Then, since \eqref{stima tau barrato} holds, from Theorem \ref{flocking particle} there exists $C_1>0$ independent of $t$ and $N$ such that
$$
d_X(t)\leq \left( \max_{s\in [-\tau(0),0]} d_X(s) \right) e^{-C_1 t}
$$
for $t\in[0,T)$, where $d_X(t)$ is the diameter  defined in \eqref{diameter}. Moreover, let $\mu^N_t$ be as in \eqref{ftN}. We know that this is a solution to \eqref{kinetic model} in the sense of \eqref{weak form}. Now, if we fix $T>0$, then by Theorem \ref{stability theorem} there exists a constant $C_2>0$ independent of $N$ such that
$$
d_p(\mu_t,\mu^N_t)\leq C_2 \max_{s\in[-\tau(0),0]} d_p(g_s,g^N_s)
$$
for $t\in[0,T)$. Now, letting $N\rightarrow+\infty$ yields $d_X[\mu_t]=d_X(t)$ for all $t\in[0,T)$ and $d_X[g_s]=d_X(s)$ for all $s\in [-\tau(0),0]$. This implies that
$$
d_X[\mu_t]\leq \left( \max_{s\in[-\tau(0),0]} d_X[g_s]\right) e^{-C_1t}
$$
for $t\in[0,T)$. Since $T$ can be chosen arbitrarily, we obtain \eqref{flocking_kinetic}.
\end{proof}

\section{Some numerical tests}\label{numerics}
In this section, we present several numerical experiments for the particle system \eqref{modello} with \eqref{symmetric} or \eqref{notsymmetric} showing the asymptotic time behavior of solutions. For that, we use the built-in {\it dde23} Matlab command, which solves delay differential equations with constant delays. We consider the one dimensional case and take the communication weight function $\psi$ either in \eqref{symmetric} or \eqref{notsymmetric} as 
\begin{equation}\label{eq_psi}
\psi(r) = \frac{1}{(1+r^2)^{\beta}}, \quad r, \ \beta >0,
\end{equation}
the coupling strength $\lambda = 1$, the number of particles $N = 10$, and the initial data are 
\begin{align*}
&x_1(t) = -3, \quad x_2(t) = 7, \quad x_3(t) = 5, \quad x_4(t) = -6, \quad x_5(t) = -1, \cr
&x_6(t) = -8, \quad x_7(t) = -4, \quad x_8(t) = -5, \quad x_9(t) = 10, \quad \mbox{and} \quad x_{10}(t) = 1, \quad \mbox{for} \quad t \leq 0
\end{align*}
that are integers drawn from the discrete uniform distribution on the interval $[-10, 10]$.

\subsection{The particle system \eqref{modello} with \eqref{symmetric}} 
In this part, we consider the particle \eqref{modello} with \eqref{symmetric}. In Figure \ref{sym_long}, we show the time evolution of solutions $\{x_i\}_{i=1}^{10}$ with $\beta = 1$ in \eqref{eq_psi} and different values of time delays, $\tau = 1, 5, 10$, and $50$. For the time delay $\tau = 1$, we cannot see the oscillatory behavior of solutions, however, this behavior appears for $\tau=5, 10$, and $50$. Furthermore, as strength of time delay increases, we need more time to have the consensus behavior and the oscillatory behavior is better observed.

\begin{figure}[t]
\centering
\includegraphics[scale=0.38]{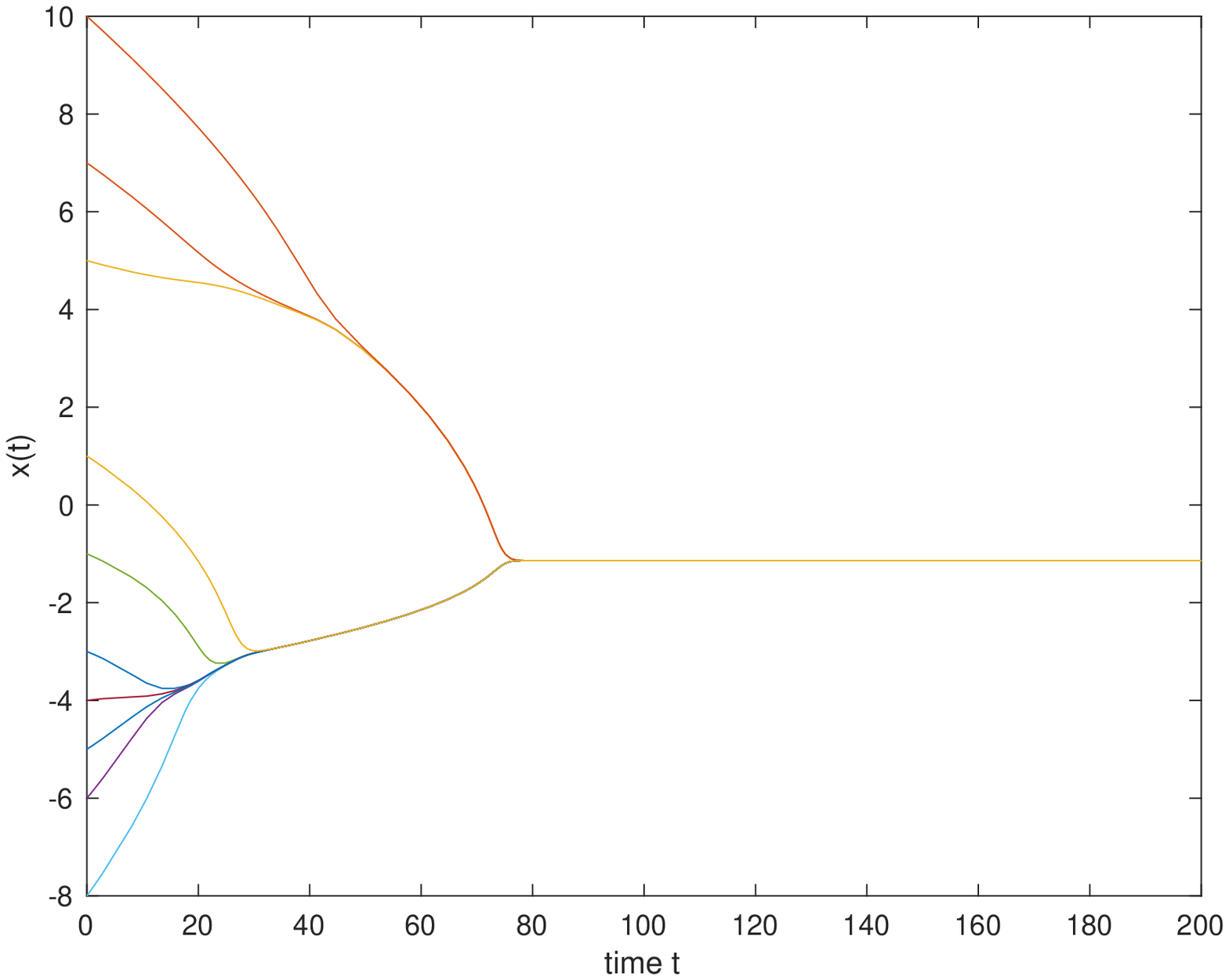}
\includegraphics[scale=0.38]{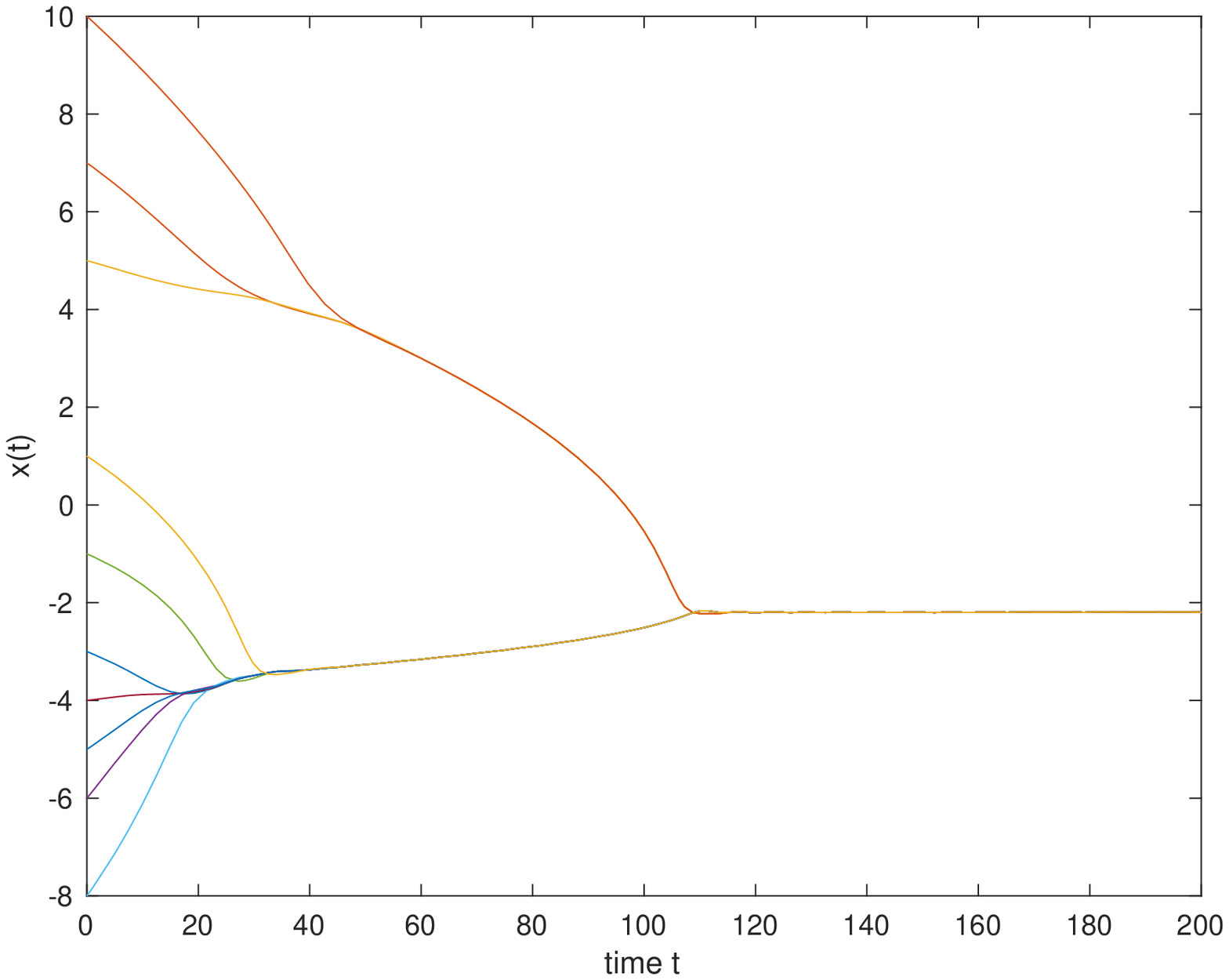}
\includegraphics[scale=0.38]{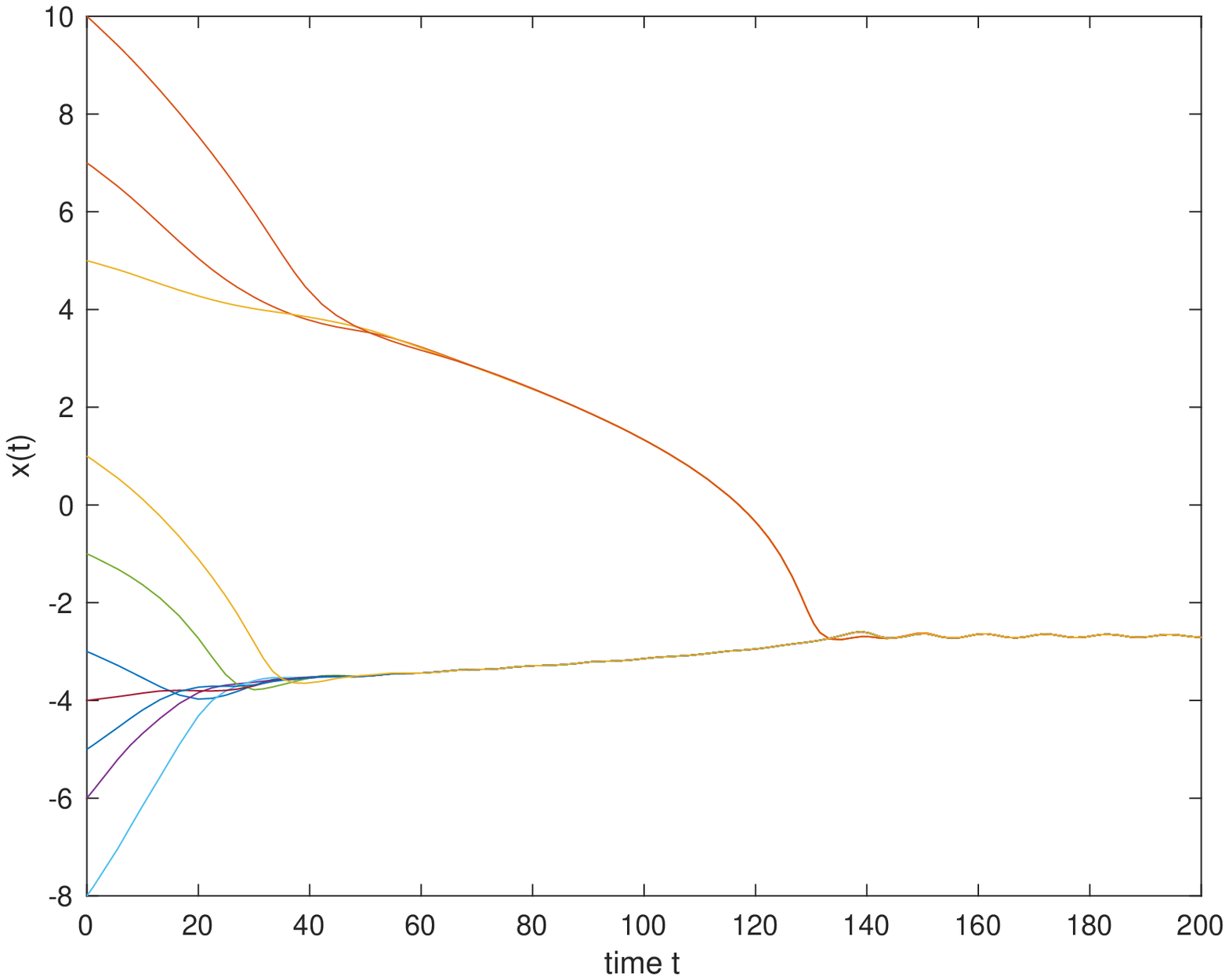}
\includegraphics[scale=0.38]{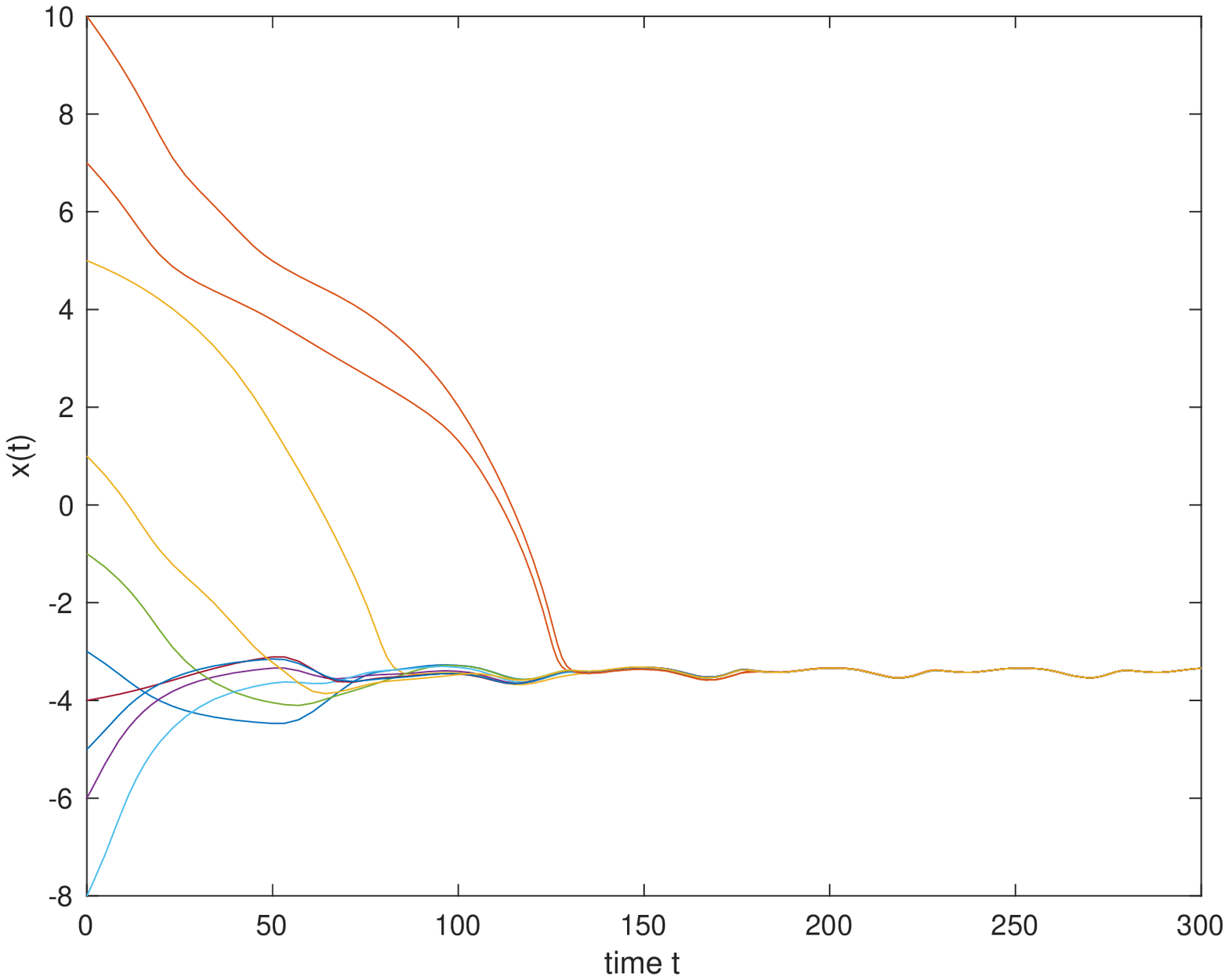}
\caption{Communication rates \eqref{symmetric} \& $\beta = 1$: time evolution of solutions with different strengths of time delays;  $\tau=1$ (top left), $\tau=5$ (top right), $\tau=10$ (bottom left), $\tau=50$ (bottom right).
}
\label{sym_long}
\end{figure}

We next take into account short-range interactions compared to the previous case; we chose $\beta = 3$ in the weight function $\psi$ in \eqref{eq_psi}. In this case, it shows the two clusters formation of solutions as time goes on, not fully consensus behavior, see Figure \ref{sym_short}. Note that multi-cluster formation of solutions to the particle system \eqref{modello} with a compactly supported weight function is investigated in \cite{JM}. We also provide the time evolution of solutions on the time interval $[0,200]$ or $[0,300]$ in the zoomed images in Figure \ref{sym_short} to take a better look at the oscillatory behavior of solutions depending on the strengths of time delays.

\begin{figure}[ht]
\centering
\pgfmathsetlength{\imagewidth}{\linewidth}%
\pgfmathsetlength{\imagescale}{\imagewidth/524}%
\begin{tikzpicture}[x=\imagescale,y=-\imagescale]
\node[anchor=south west] at (0,0) {\includegraphics[scale=0.38]{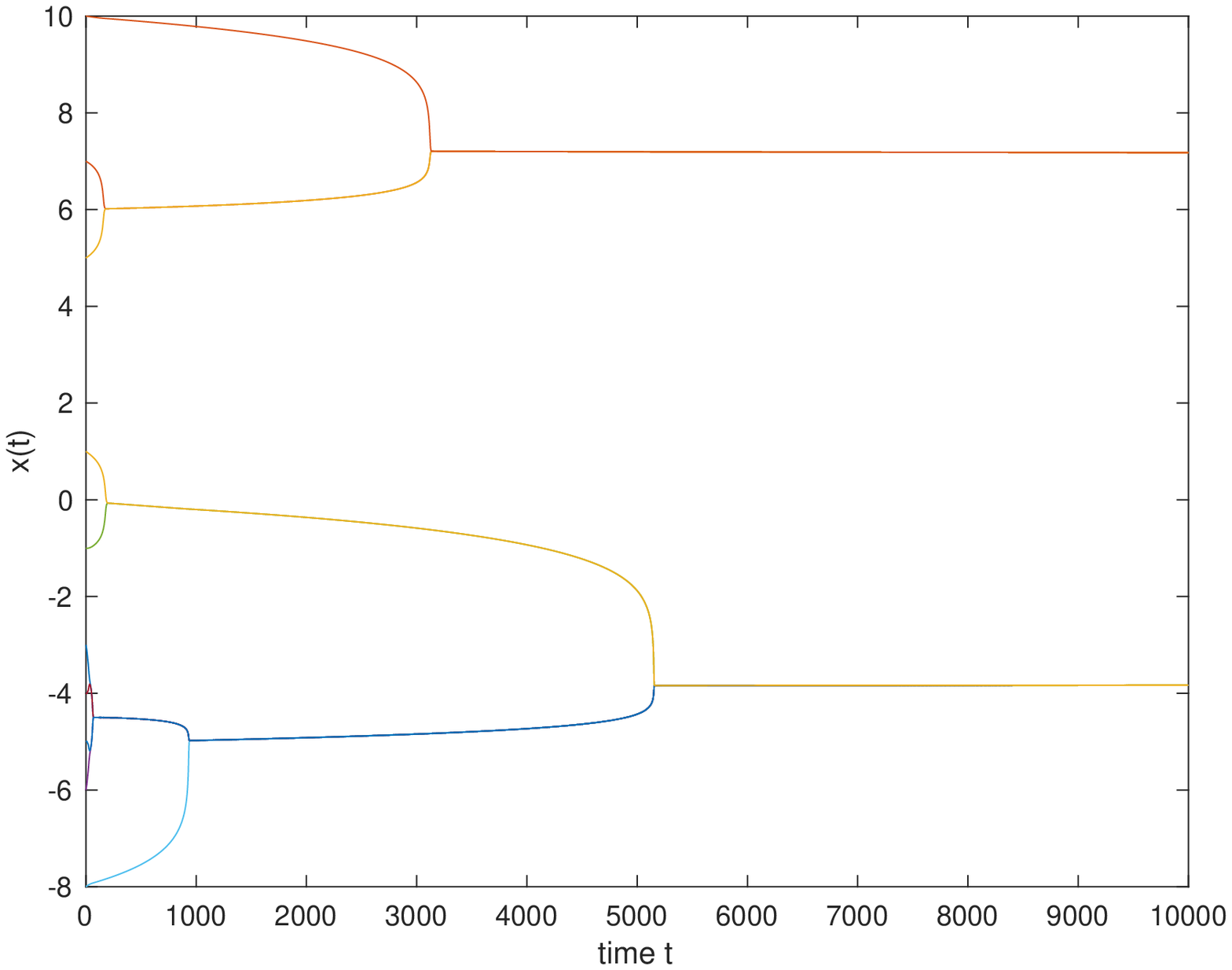}};
\node[anchor=south west] at (170,-70) {\includegraphics[scale=0.15]{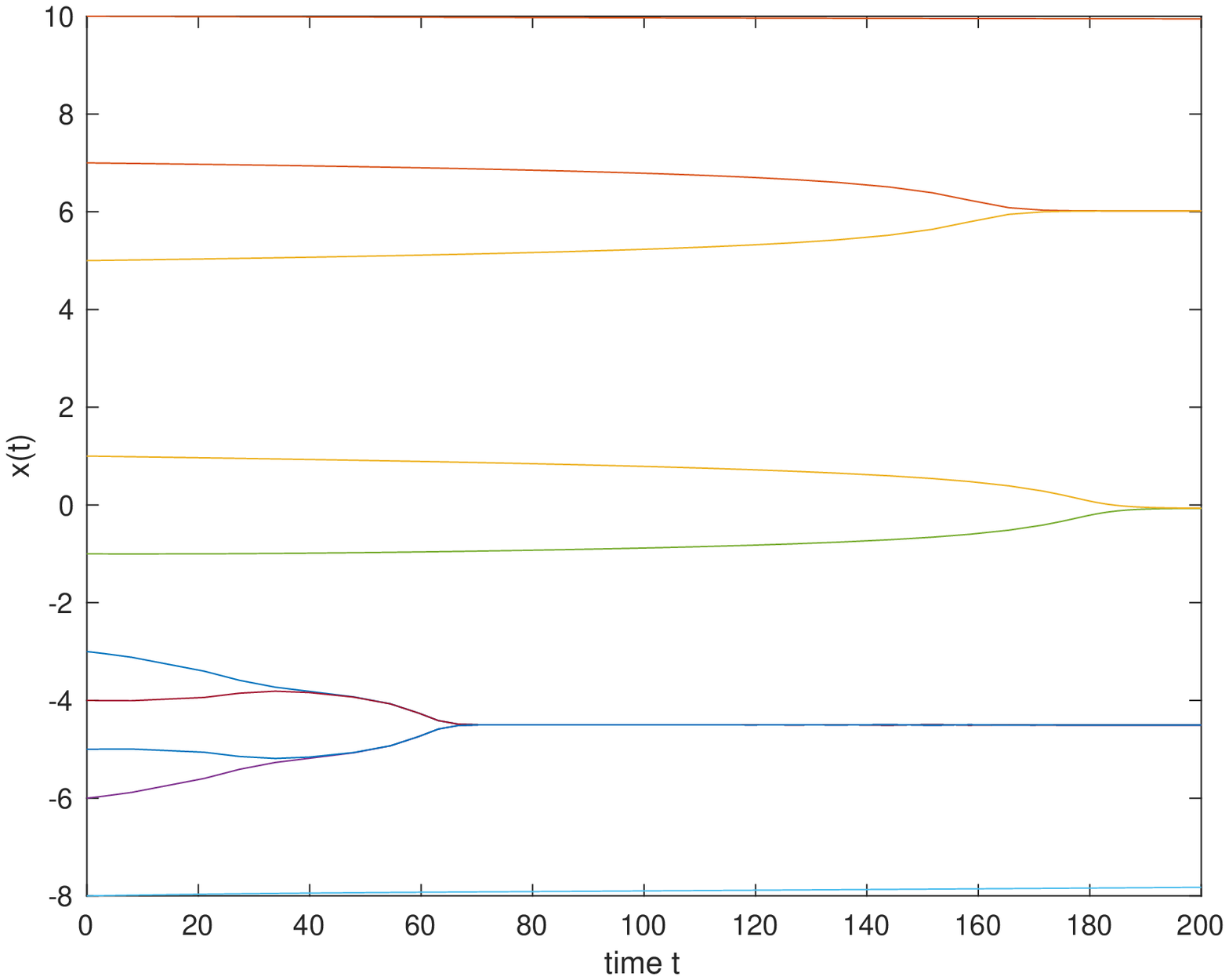}};
\node[anchor=south west] at (260,0) {\includegraphics[scale=0.38]{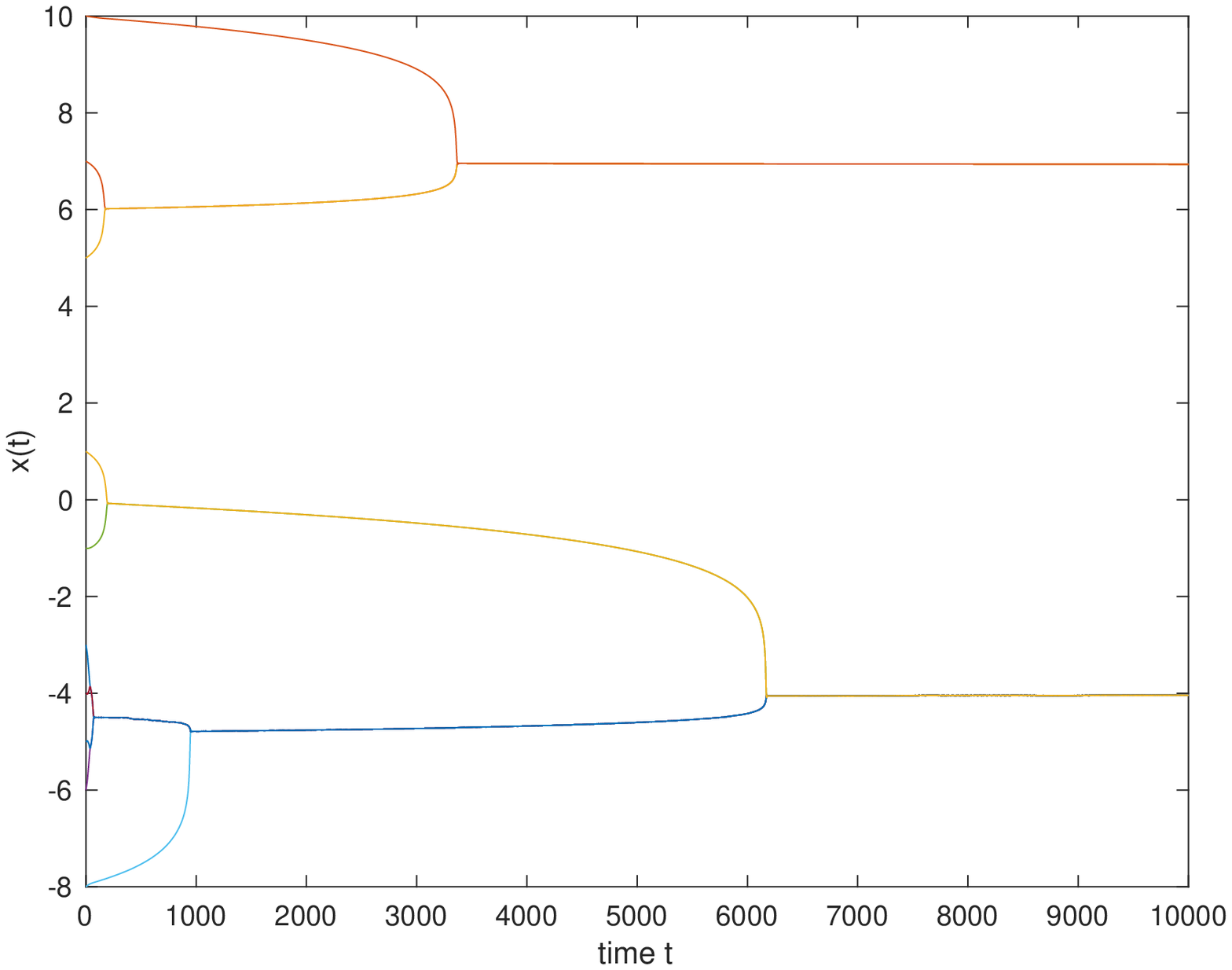}};
\node[anchor=south west] at (432,-70) {\includegraphics[scale=0.15]{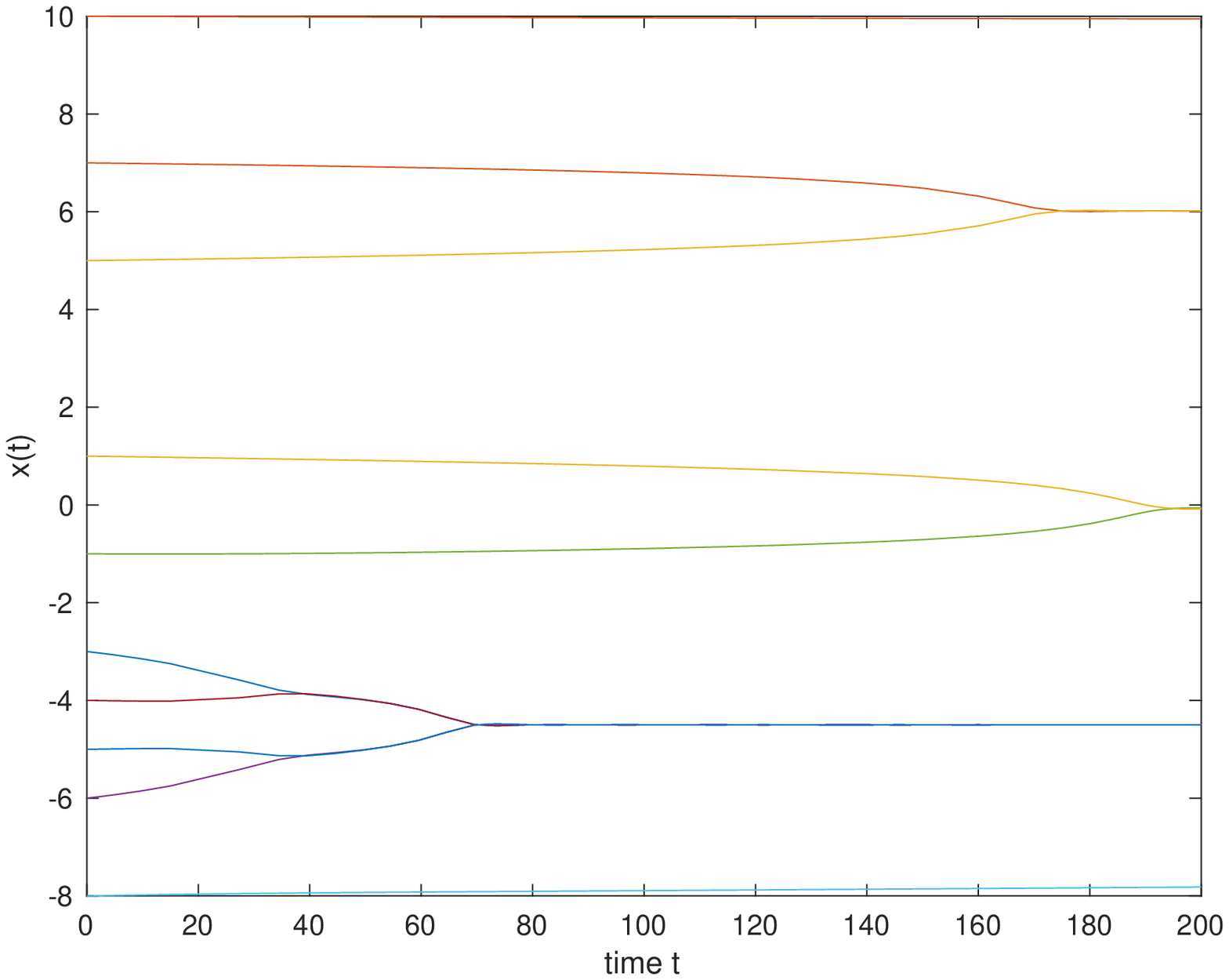}};
\node[anchor=north west] at (0,0) {\includegraphics[scale=0.38]{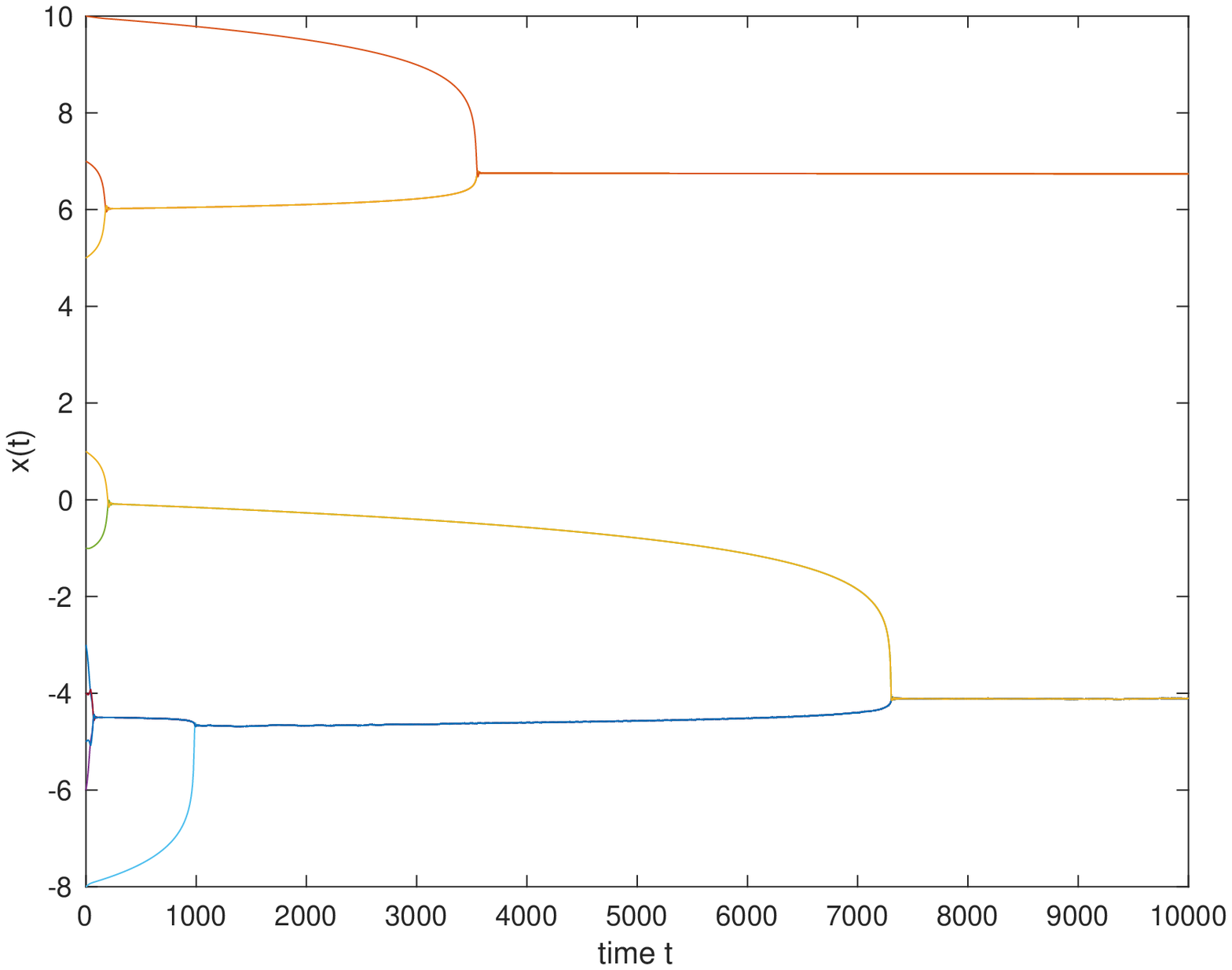}};
\node[anchor=north west] at (170,45) {\includegraphics[scale=0.15]{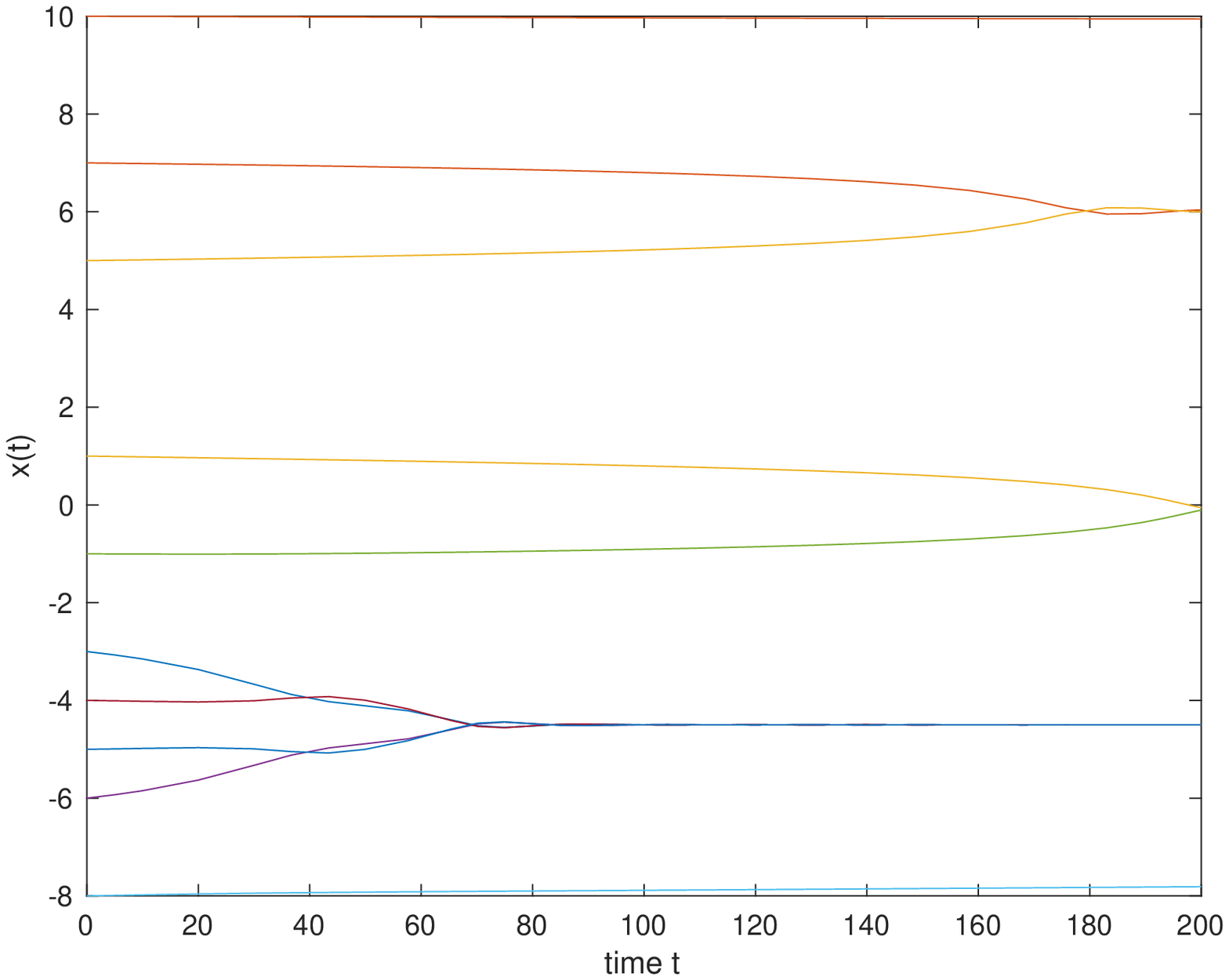}};
\node[anchor=north west] at (260,0) {\includegraphics[scale=0.38]{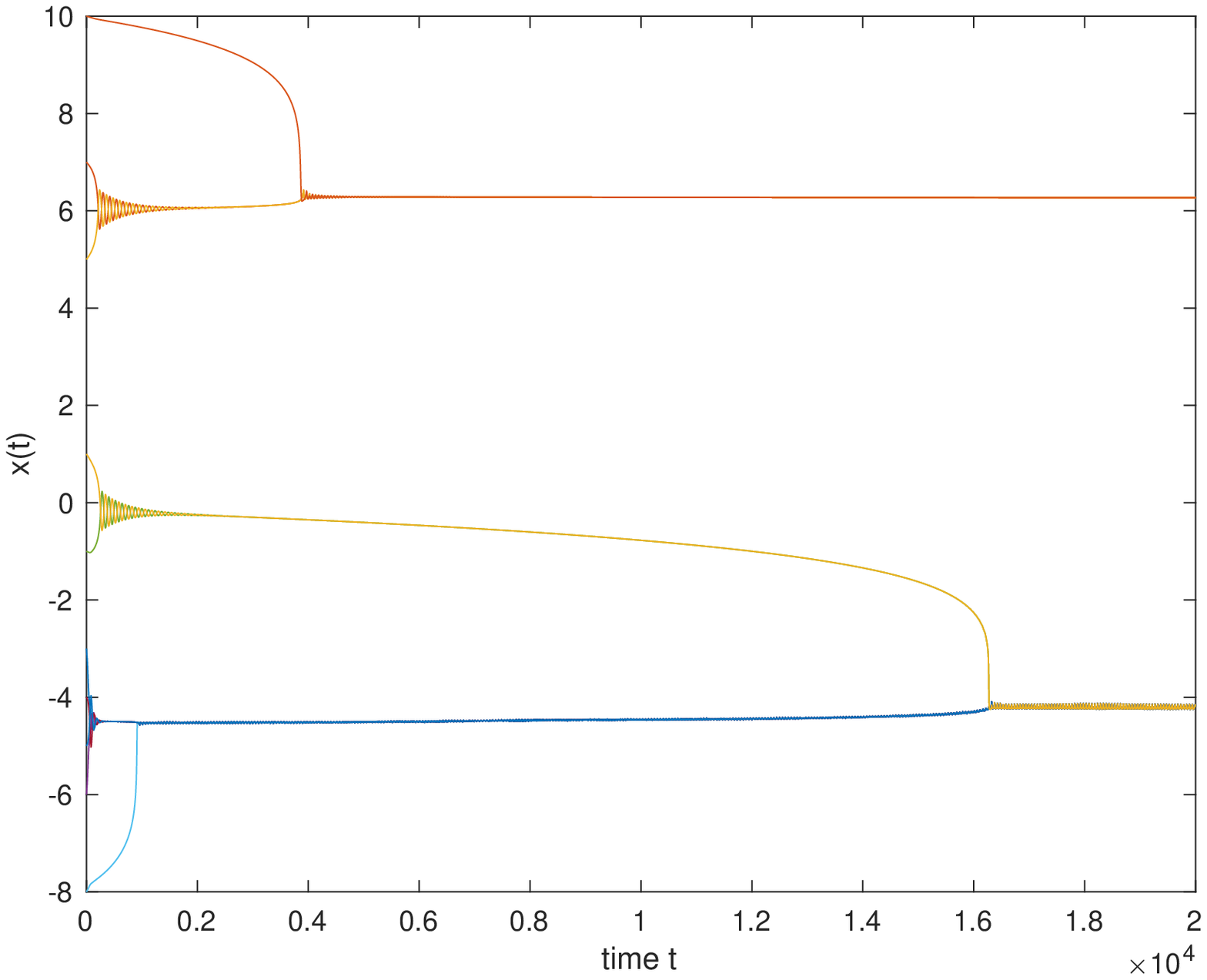}};
\node[anchor=north west] at (432,45) {\includegraphics[scale=0.15]{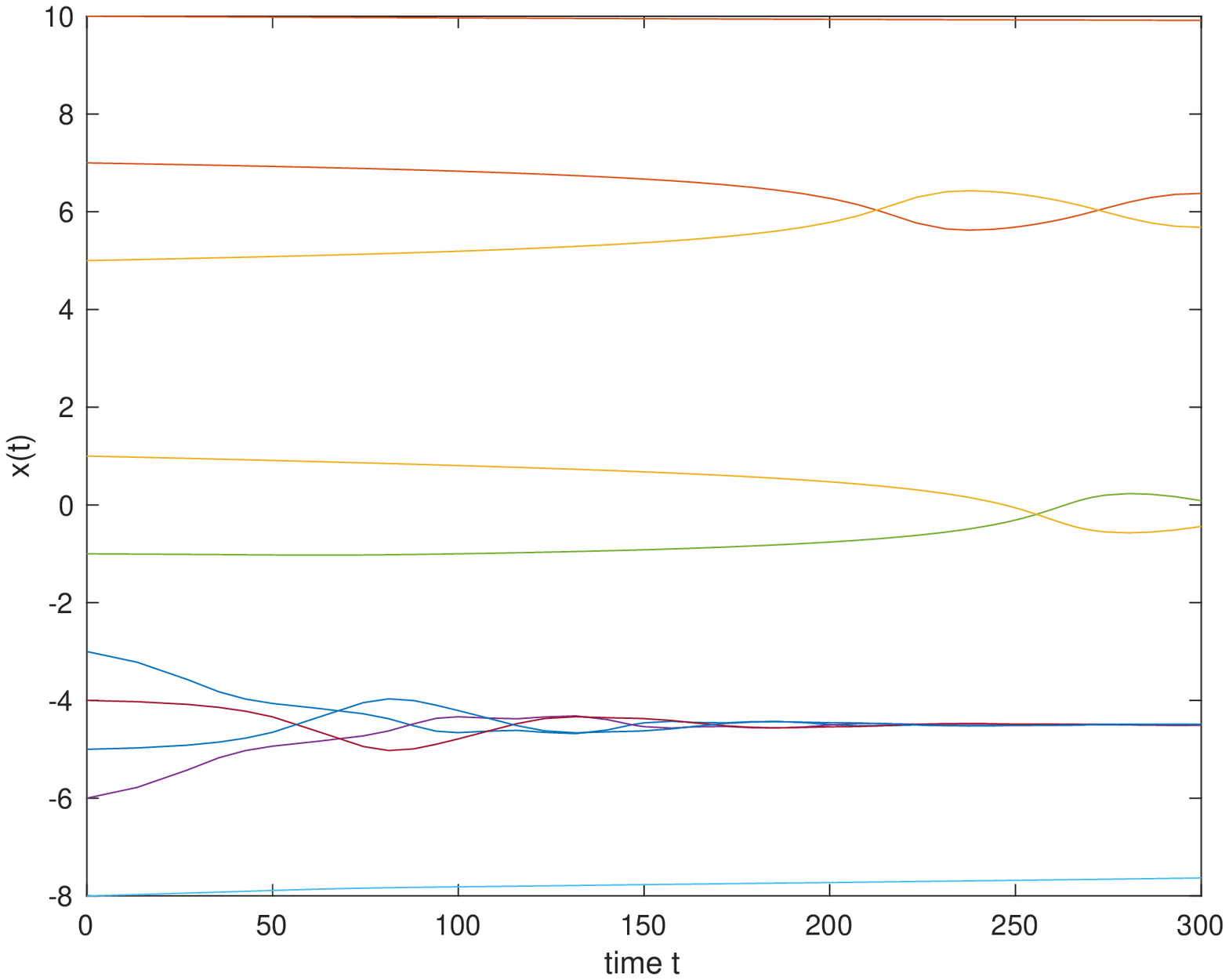}};
\end{tikzpicture}
\caption{Communication rates \eqref{symmetric} \& $\beta = 3$: time evolution of solutions with different strengths of time delays;  $\tau=1$ (top left), $\tau=5$ (top right), $\tau=10$ (bottom left), $\tau=50$ (bottom right).
}
\label{sym_short}
\end{figure}

\subsection{The particle system \eqref{modello} with \eqref{notsymmetric}} 
In this subsection, we consider the particle system \eqref{modello} with \eqref{notsymmetric}. Similarly as before, we first investigate the time evolution of solutions for $\beta=1$ in Figure \ref{notsym_long}. As expected, the consensus behavior of solutions is achieved faster in this case than in the previous case, see also \cite[Section 2]{CCP} for the comparison between the Cucker-Smale flocking model and the Cucker-Smale flocking model with a normalized weight. Compared to the previous case, see Figure \ref{sym_long}, it seems that the particle system \eqref{modello} with \eqref{notsymmetric} is more sensitive to the strength of time delay; multi-cluster formation is not observed during the time evolution for $\tau = 5, 10, 50$, and after the consensus is achieved, it still highly oscillates, see the case with $\tau = 50$ in Figure \ref{notsym_long}.

\begin{figure}[t]
\centering
\includegraphics[scale=0.38]{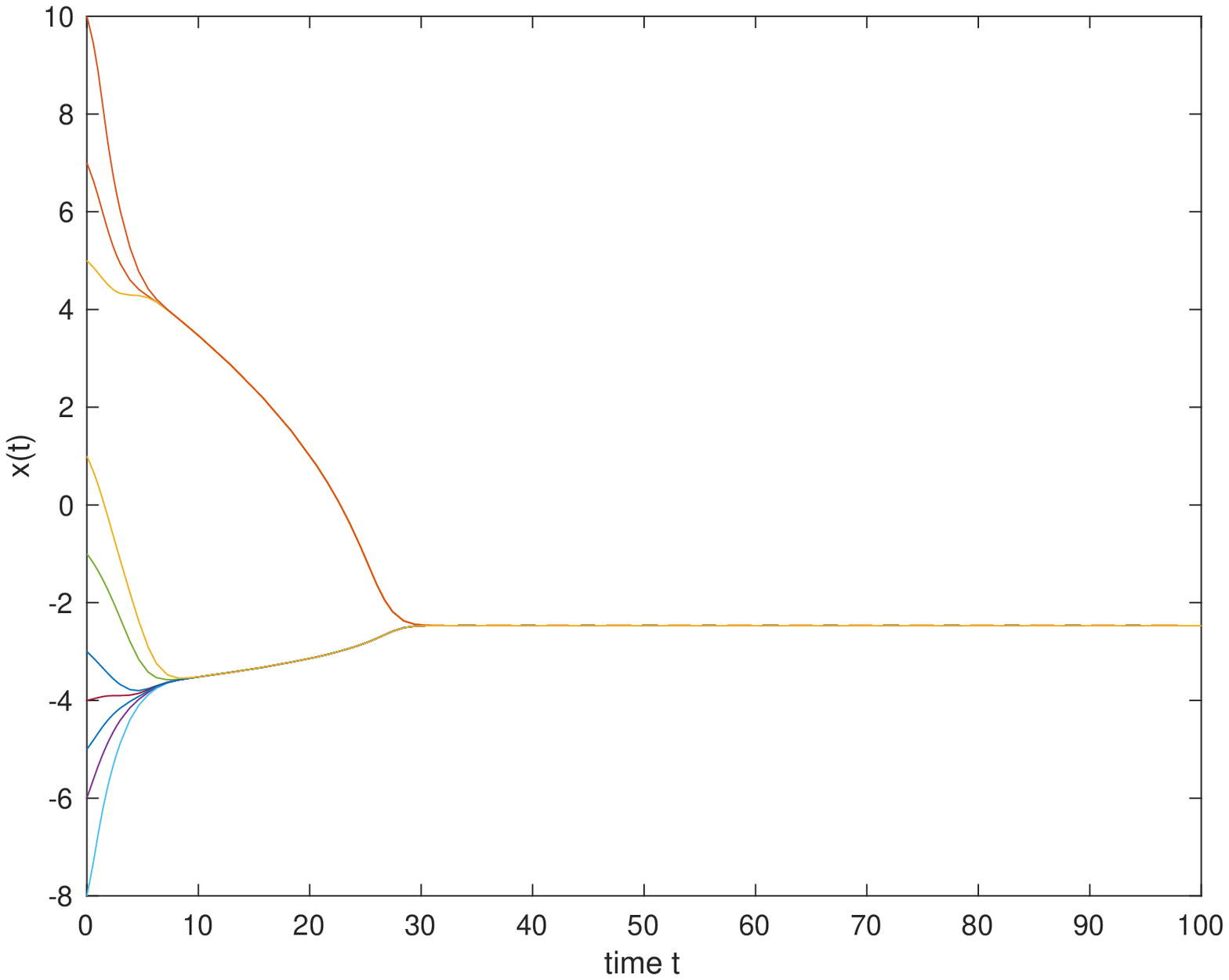}
\includegraphics[scale=0.38]{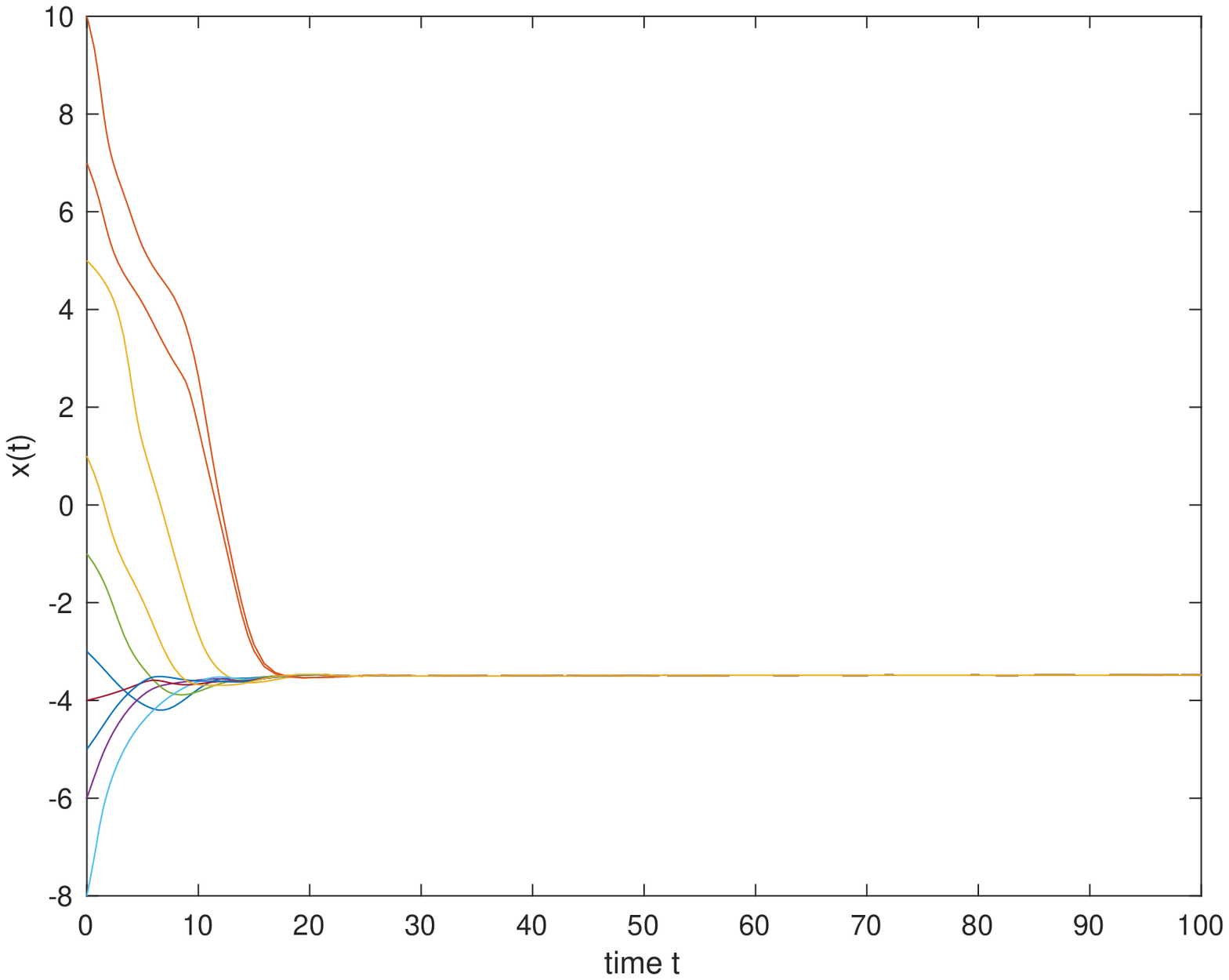}
\includegraphics[scale=0.38]{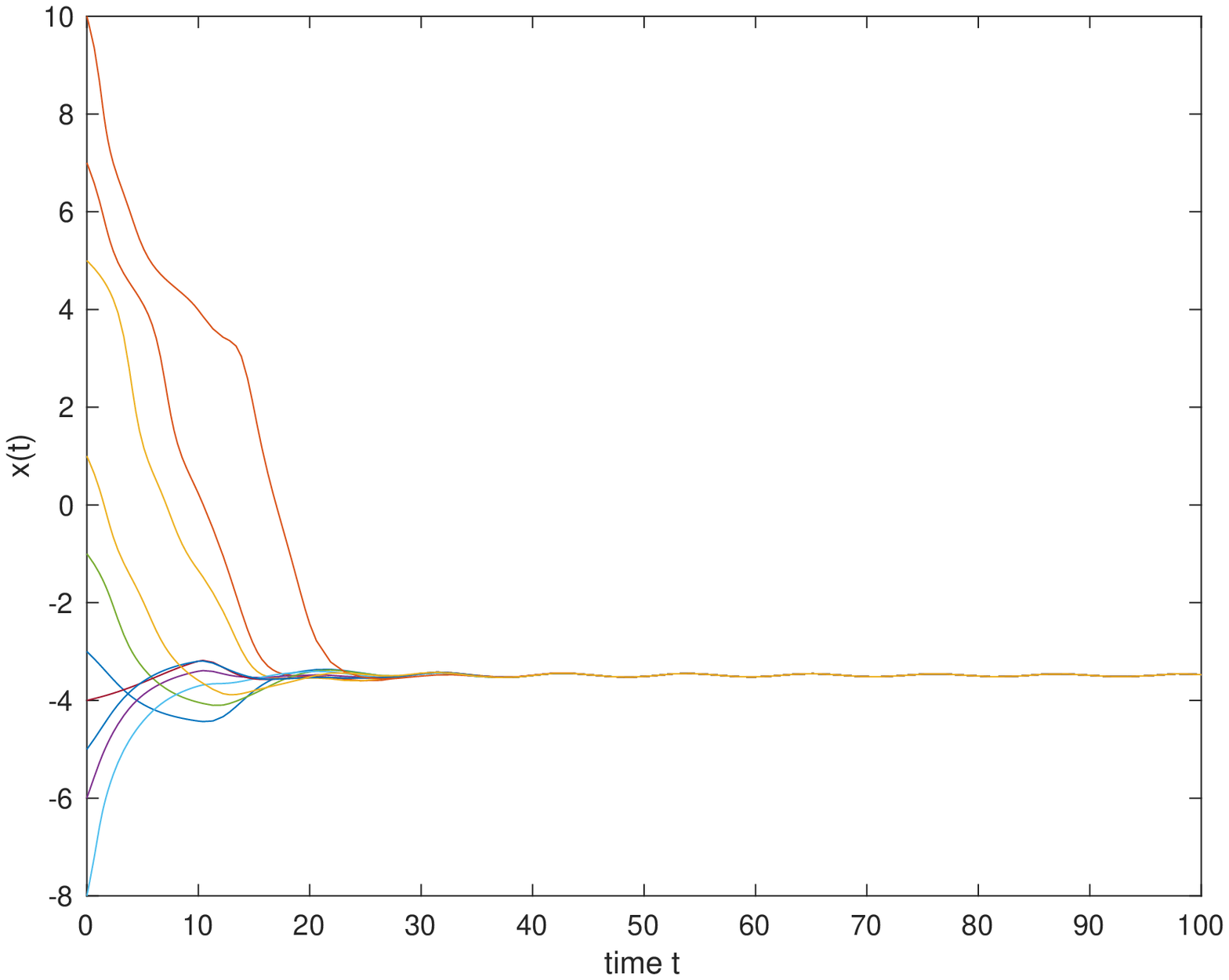}
\includegraphics[scale=0.38]{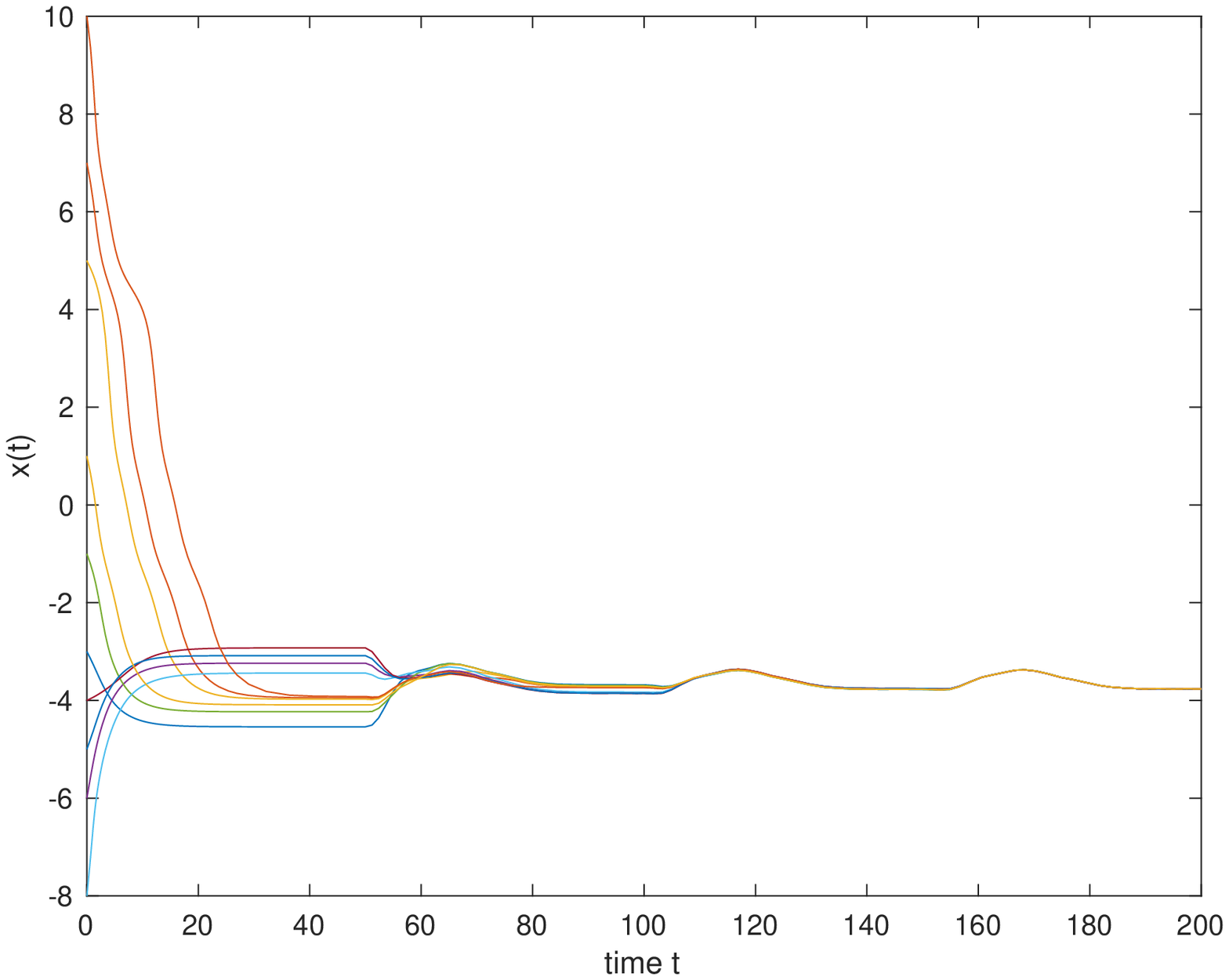}
\caption{Communication rates \eqref{notsymmetric} \& $\beta = 1$: time evolution of solutions with different strengths of time delays;  $\tau=1$ (top left), $\tau=5$ (top right), $\tau=10$ (bottom left), $\tau=50$ (bottom right).
}
\label{notsym_long}
\end{figure}

We finally provide the time evolution of solutions for the case $\beta=3$ in Figure \ref{notsym_short}. Again, in this case, we have the two-cluster formation of solutions. Similarly as before, we put the time evolution of solutions on the time interval $[0,100]$ or $[0,200]$ in the zoomed images in Figure \ref{notsym_short} to have a closer look at the oscillatory behavior of solutions with different values of time delays. As mentioned before, we observe the highly oscillatory behavior of solutions as the strength of time delay increases.

\begin{figure}[ht]
\centering
\pgfmathsetlength{\imagewidth}{\linewidth}%
\pgfmathsetlength{\imagescale}{\imagewidth/524}%
\begin{tikzpicture}[x=\imagescale,y=-\imagescale]
\node[anchor=south west] at (0,0) {\includegraphics[scale=0.38]{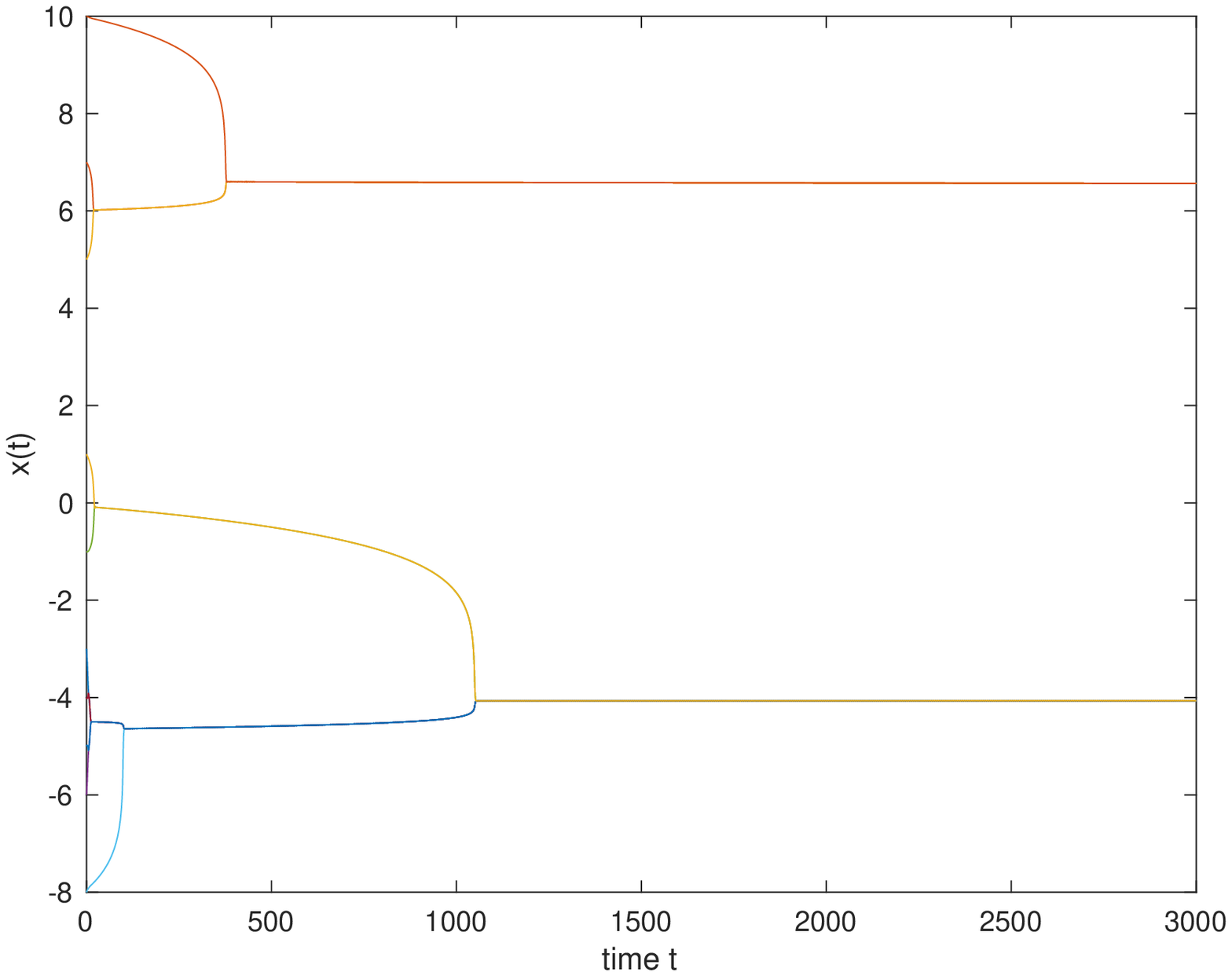}};
\node[anchor=south west] at (170,-65) {\includegraphics[scale=0.15]{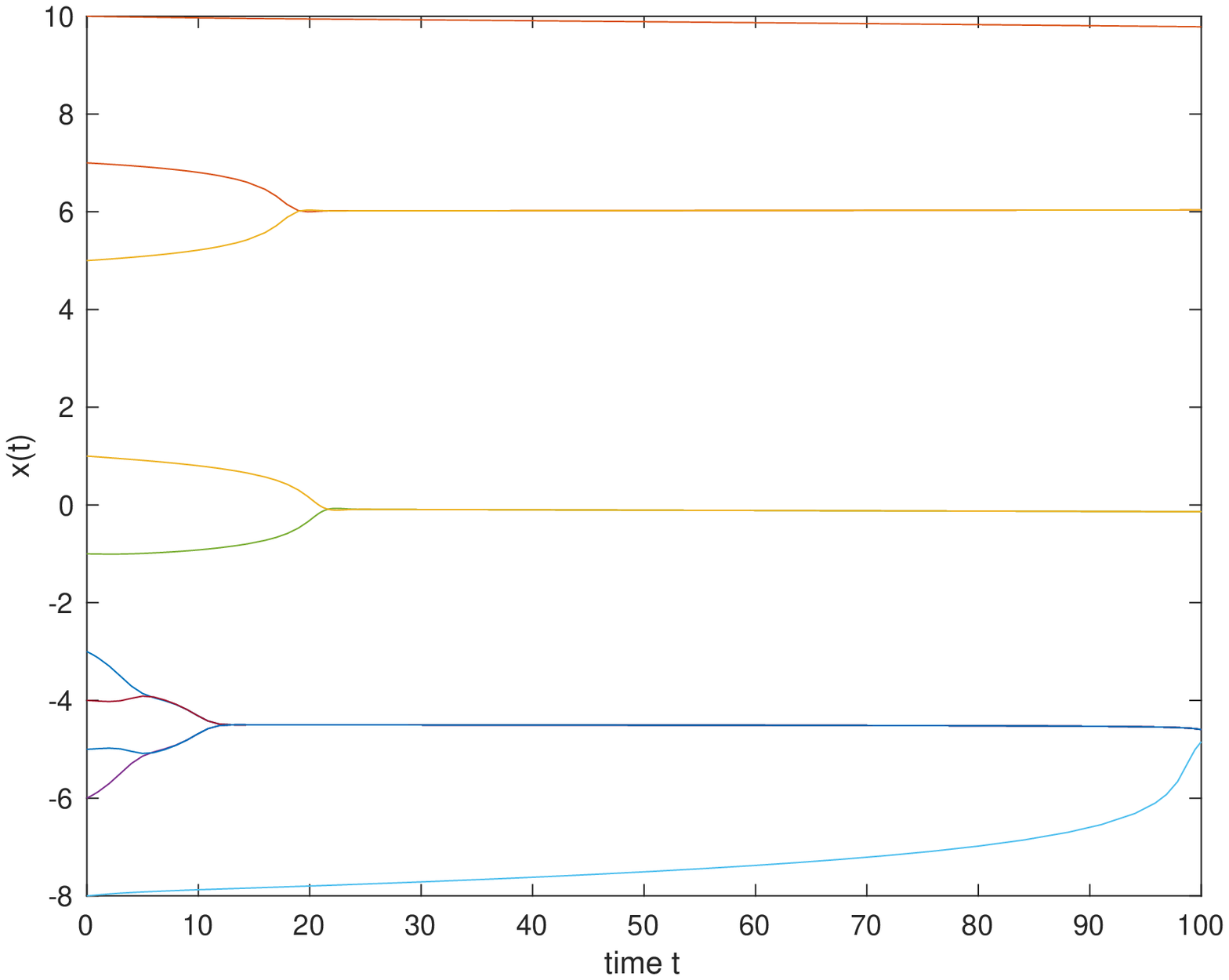}};
\node[anchor=south west] at (260,0) {\includegraphics[scale=0.38]{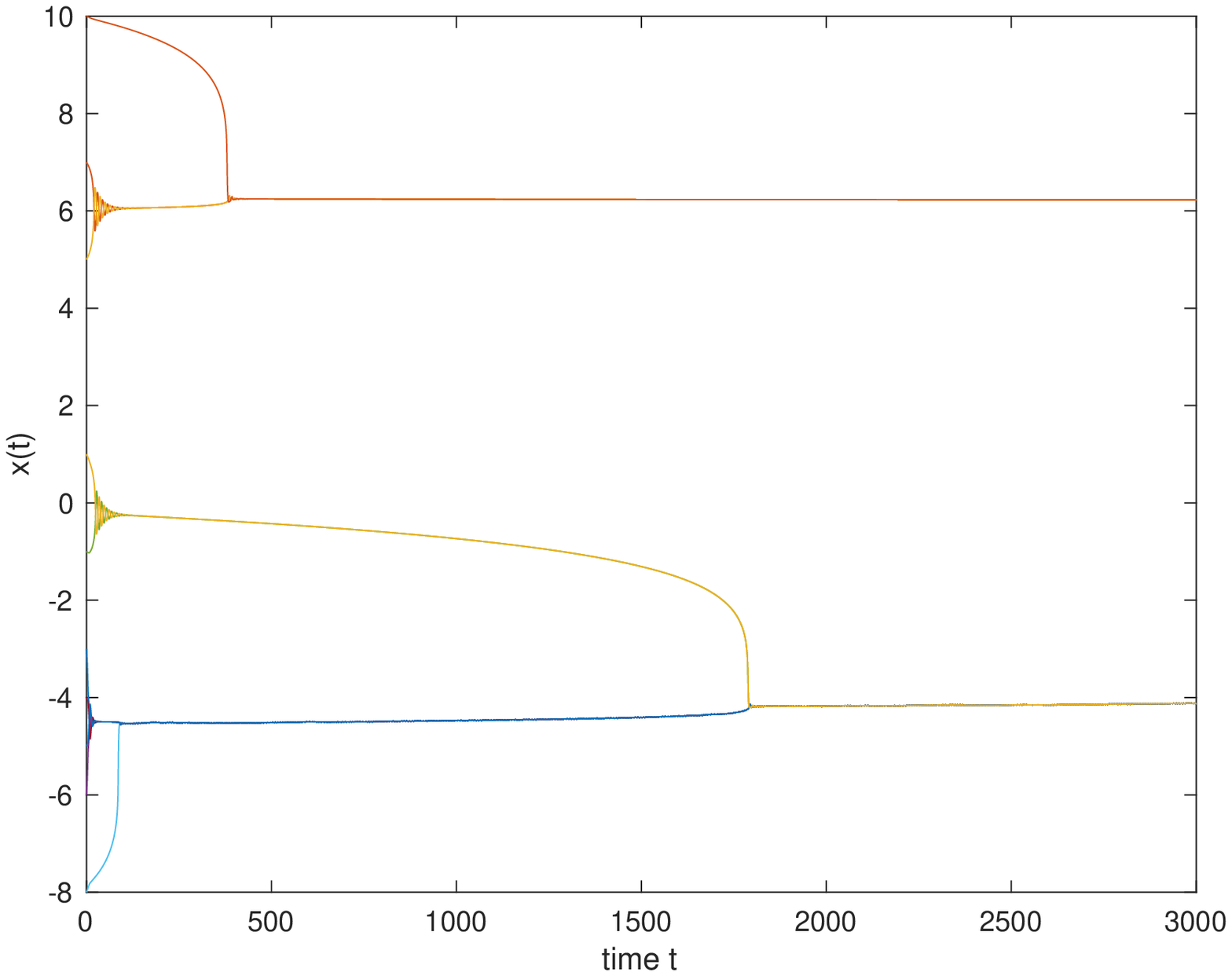}};
\node[anchor=south west] at (432,-65) {\includegraphics[scale=0.15]{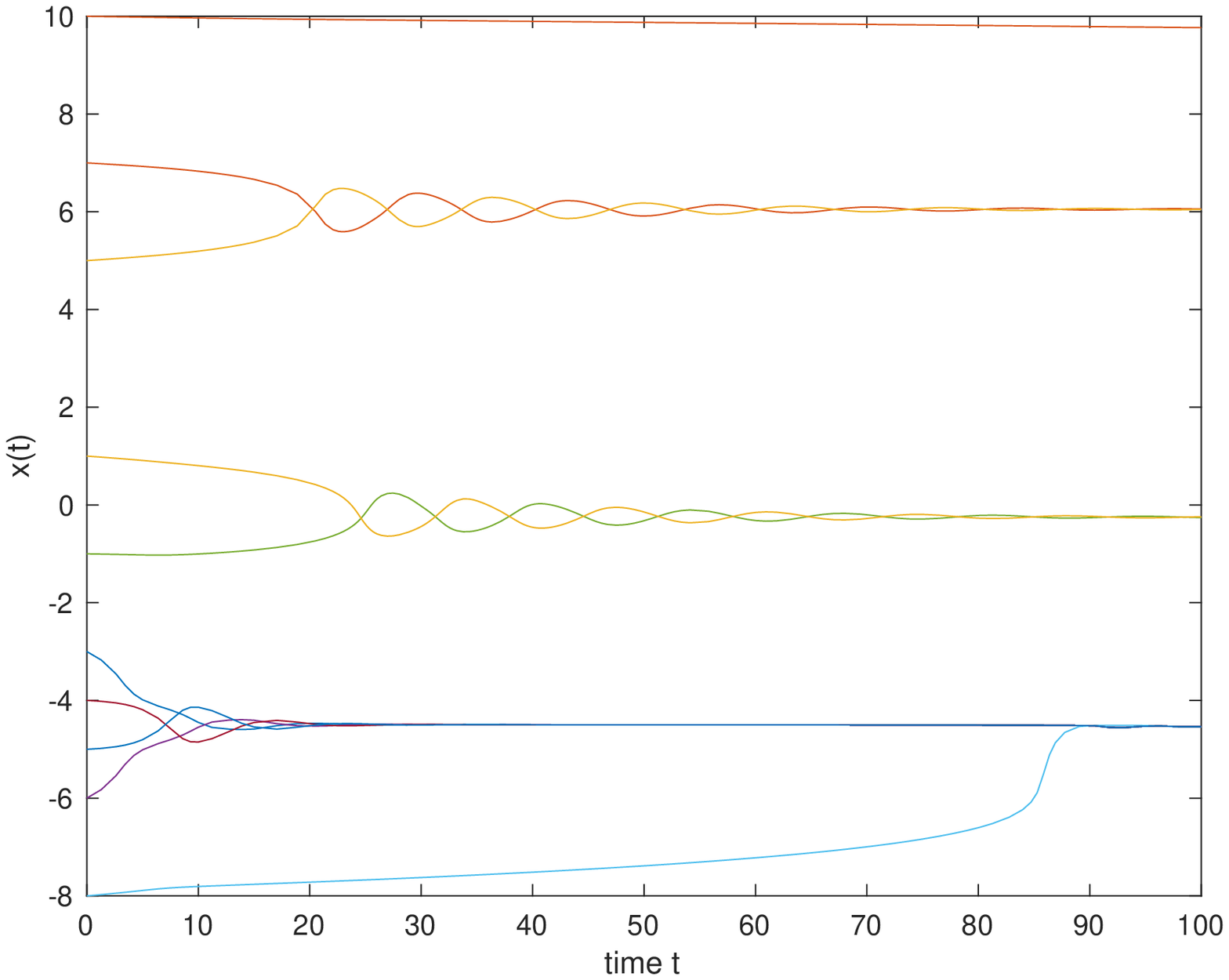}};
\node[anchor=north west] at (0,0) {\includegraphics[scale=0.38]{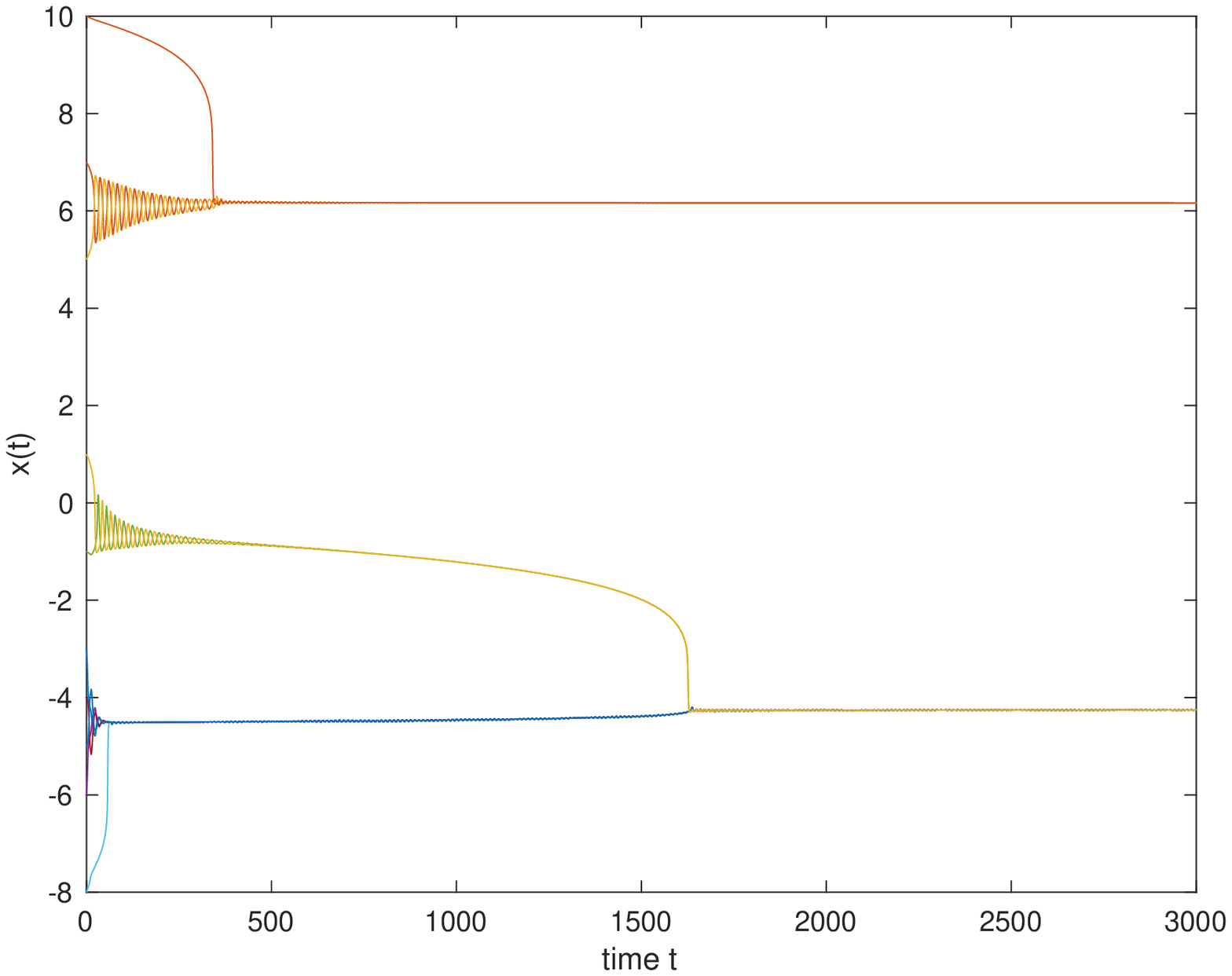}};
\node[anchor=north west] at (170,50) {\includegraphics[scale=0.15]{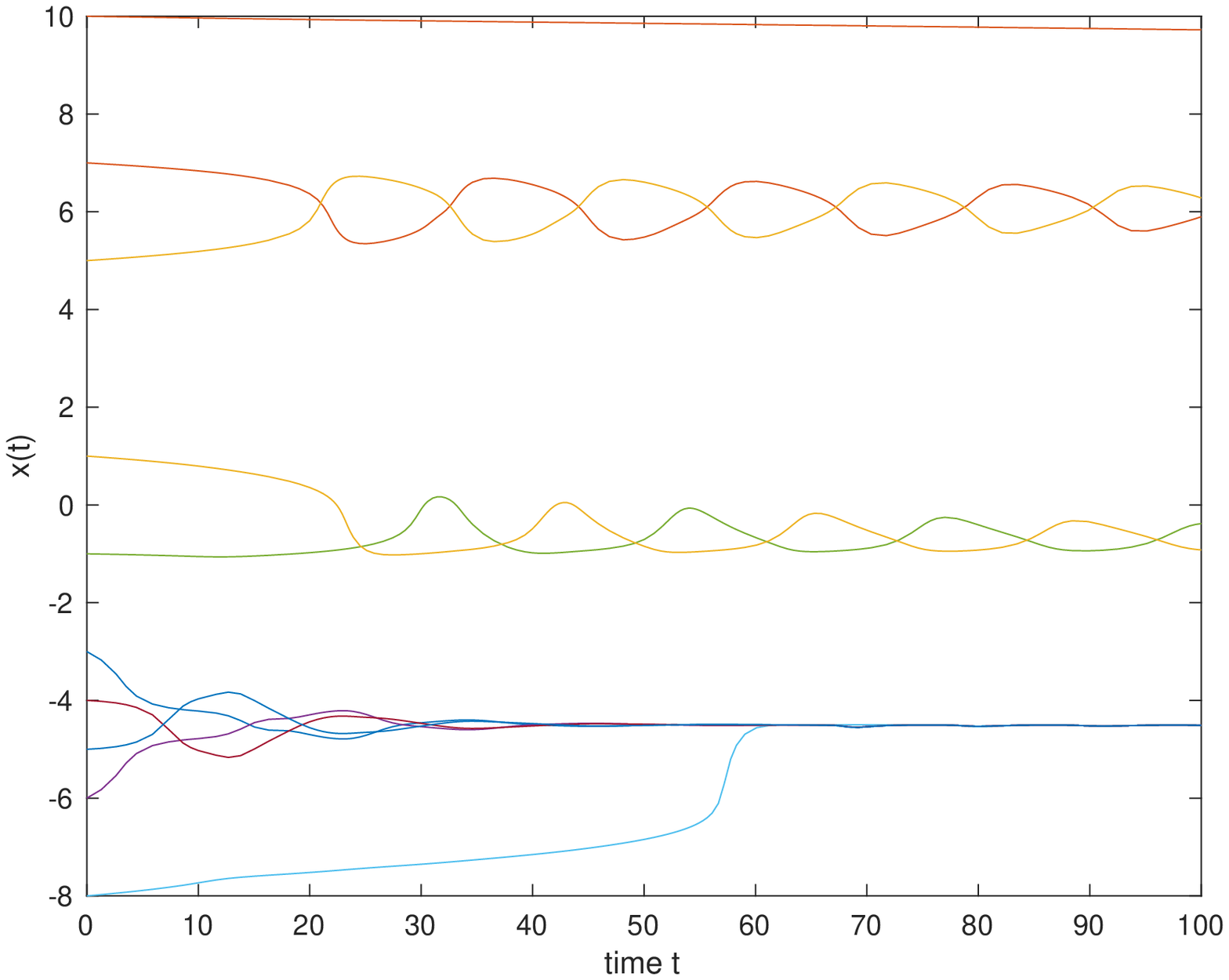}};
\node[anchor=north west] at (260,0) {\includegraphics[scale=0.38]{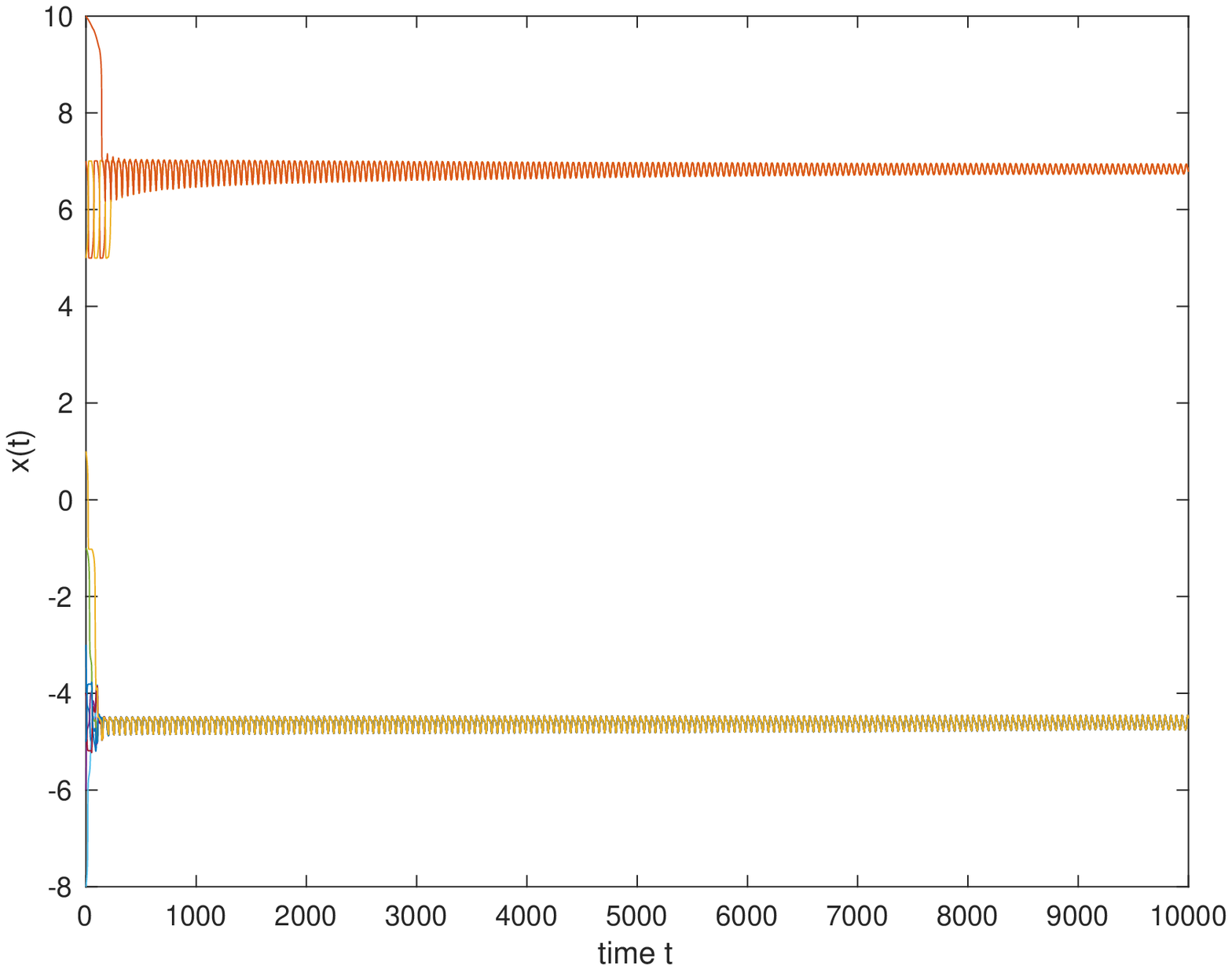}};
\node[anchor=north west] at (432,50) {\includegraphics[scale=0.15]{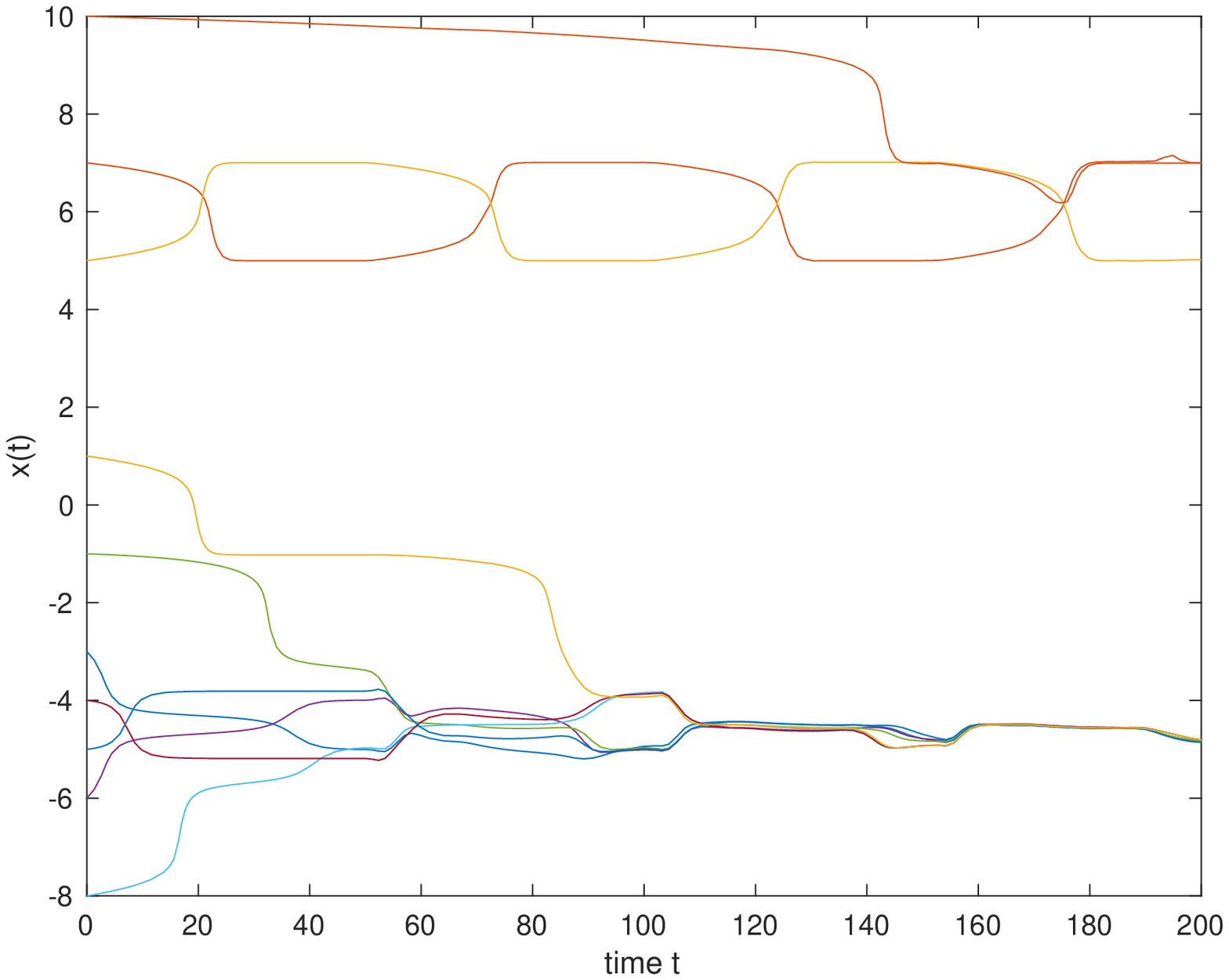}};
\end{tikzpicture}
\caption{Communication rates \eqref{notsymmetric} \& $\beta = 3$: time evolution of solutions with different strengths of time delays;  $\tau=1$ (top left), $\tau=5$ (top right), $\tau=10$ (bottom left), $\tau=50$ (bottom right).
}
\label{notsym_short}
\end{figure}

\section*{Acknowledgments}
The first author was supported by POSCO Science Fellowship of POSCO TJ Park Foundation. The second and third author were  partially supported by the GNAMPA 2019  project {\sl Modelli alle derivate parziali per sistemi multi-agente} (INdAM).

\end{document}